 \newcommand{\m}{\mathfrak{m} }
\newcommand{\indeg}{\operatorname{indeg}}
\theoremstyle{plain}
 \newtheorem{question}{Question}
\newtheorem{thm}{Theorem}[section]
\newtheorem{lem}[thm]{Lemma}
\newtheorem{cor}[thm]{Corollary}
\newtheorem{prop}[thm]{Proposition}
\newtheorem{rem}[thm]{Remark}
\newtheorem{defin}[thm]{Definition}
\newtheorem{defen}[thm]{Definition}
\newtheorem{exam}[thm]{Example}
\newtheorem{defin-rem}[thm]{Definition and Remark}
\def\FF{\mathbf  F}
\def\GG{\mathbf  G}
\def\NN{ {\bf N} }
\newcommand{\D}{\operatorname{D}}
\newcommand{\dep}{\operatorname{depth}}
\newcommand{\Tor}{\operatorname{Tor}}
\newcommand{\Ker}{\operatorname{ker}}
\newcommand{\reg}{\operatorname{reg}}
\newcommand{\lin}{\operatorname{lin}}
\newcommand{\ld}{\operatorname{ld}}
\newcommand{\HH}{\operatorname{H}}
\begin{document}

\title[ ] { Regularity and linearity defect of  modules over local rings}

\begin{abstract}
Given a finitely generated module $M$ over a commutative local ring (or a standard graded $k$-algebra) $(R,\m,k) $  we  detect its complexity in terms of numerical invariants coming from suitable
$\m$-stable filtrations $\mathbb{M}$ on $M$. We study the Castelnuovo-Mumford regularity of $gr_{\mathbb{M}}(M) $ and the linearity defect of $M,$ denoted $\ld_R(M),$ through a deep investigation based on the theory of standard bases. If $M$ is a graded $R$-module, then $\reg_R(gr_{\mathbb{M}}(M)) <\infty$ implies $\reg_R(M)<\infty$ and the converse holds provided $M$ is of homogenous type. An analogous result can be proved in the local case in terms of the linearity defect. Motivated by a positive answer in the graded case, we present for  local rings a partial answer to a question raised by Herzog and Iyengar of whether $\ld_R(k)<\infty$
implies $R$ is Koszul.
 \end{abstract}


 \author[R.~ Ahangari Maleki]{Rasoul Ahangari Maleki}
\address{Rasoul Ahangari Maleki\\ Faculty of Mathematical Sciences and Computer, Kharazmi  University, Tehran, Iran}
\email{std\_ahangari@khu.ac.ir}

\author[M.~E.~Rossi]{Maria Evelina Rossi}
\address{Maria Evelina Rossi\\ Department of Mathematics, University of Genova \\ Via Dodecaneso 35, 16146-Genova, Italy}
\email{rossim@dima.unige.it}





\subjclass[2000]{13D02 (primary), 16W50 , 13D07 (secondary), 16W70, 16S37 }
\keywords{Regularity, Linearity defect, Minimal free  resolutions, Standard basis, Associated graded module, Filtered modules, Koszul algebras}
\thanks{The second  author  was  supported  by MIUR, PRIN 2010-11 (GVA).  This work was partly accomplished while the first author  was visiting the University of Genoa. }

\maketitle

\section*{Introduction and notation}
Throughout this paper $(R,\m,k)$ is a commutative Noetherian local ring (or a standard graded $k$-algebra)
with   maximal ideal (or  homogeneous maximal ideal)   $\m$ and residue field $k. $ All the modules we consider are
finitely generated over $R$.
Let $M$ be an $R$-module,   according to   \cite{RV-Lect}, we say that a descending filtration of
submodules $\mathbb{M}=\{\mathfrak{F}_{p}M\}_{p\geq 0}$
of  $M=\mathfrak{F}_0 M$ is an $\m$-filtration if $\m\ \mathfrak{F}_{p}M\subseteq \mathfrak{F}_{p+1}M$
for every $p\geq 0$, and  $\mathbb{M} $ is  an $\m$-stable (or good) filtration  if $\m ~\mathfrak{F}_{p}M=\mathfrak{F}_{p+1}M$
for all sufficiently large $p$. In the following a filtered module $M$
will be always an $R$-module equipped with an  $\m$-stable  filtration $\mathbb{M}$.  Define
 $$gr_{\mathbb{M}}(M)=\bigoplus_{p\geq 0}(\mathfrak{F}_{p}M/\mathfrak{F}_{p+1}M)$$
the {\bf associated graded module} to $M$ with respect to  the filtration $\mathbb{M}$.  If $\mathbb{M}= \{ \m^pM\} $ is the $\m$-adic filtration on $M, $ we denote $gr_{\mathbb{M}}(M)$ simply by $M^g.$ Then $gr_{\mathbb{M}}(M) $ is a graded $R^g$-module.
  When $R$ is a standard graded $k$-algebra,  then $R^{g}$ is naturally isomorphic to $R$.

The main goal of this paper is to study  properties  of the  module M, such as the linearity defect, the Koszulness  and the regularity  by means of the graded structure  of $gr_{\mathbb{M}}(M) $ for a given     $\m$-stable  filtration $\mathbb{M}.$  The  strength  of our approach comes from a deep investigation of   the interplay between a minimal free resolution of $gr_{\mathbb{M}}(M)$ as $R^g$-module and a free resolution of $M$ as $R$-module coming from  the theory of the standard bases.    \\

  \indent
To avoid triviality, we assume that $gr_{\mathbb{M}}(M)$ is not zero or
equivalently $M\neq 0$. We   consider a minimal free resolution of $M$ as finitely generated  $R$-module, that is, a complex of free $R$-modules
$$\FF: \cdots \to  F_{i+1} \stackrel{\phi_{i+1}} \longrightarrow  F_{i}  \stackrel{\phi_{i}} \longrightarrow  F_{i-1} \to \cdots \to F_1\stackrel{\phi_{1}} \longrightarrow F_0\to 0$$
such that $\HH_i(\FF)=0$ for $i>0$ and $\HH_0(\FF)=M$, $  \phi_{i+1}(F_{i+1})  \subseteq \m  F_i$ for every $i$. Such a resolution exists and it is unique up to an isomorphism of complexes.
 By definition, the $i$-th Betti number $\beta_i^R(M)$ of $M$   is the rank of $F_i, $ that is $\beta_{i}^R(M)=\dim_k  \Tor^R_i(M,k). $   It is well known that,  given a  minimal graded free resolution $\mathbf{G}$ of
$gr_{\mathbb{M}}(M)$ as $R^g$-module, one  can build up  a free resolution $\FF$ of $M$ (not necessarily minimal)
equipped with a special filtration $\mathbb{F}$ on $\FF$
such that $gr_{\mathbb{F}}(\mathbf{F})=\mathbf{G}$ (see   \cite[2.4]{S}). In some cases   the  process for obtaining  a  {\it{minimal}}  free resolution of $M$ starting from $ gr_{\mathbb{M}}(M)$ is under control via special cancellations (see  \cite[3.1]{RS}).
As a consequence   of this construction  $$\beta_{i}^R(M) \le \beta_{i}^{R^g}(gr_{\mathbb{M}}(M)) $$ and $M$ is said of {\it{homogeneous type}} with respect to $\mathbb{M}$ if the equality holds for every $i \ge 0. $ This is  equivalent to say that the   resolution $\mathbb{F}$ in the above construction is minimal.
Interesting examples of modules  of homogeneous type are given in literature, see for example \cite{HRV} or \cite[1.10, 1.12]{RS}. It is well known that if $gr_{\mathbb{M}}(M) $ has a linear resolution, then $M$ is of homogeneous type.

 If $R$ is a   standard graded $k$-algebra and $M$ is a finitely generated graded $R$-module, then  $ \Tor^R_i(M,k)$ inherits the graded structure  and the $(i,j)$-th graded Betti number $\beta_{ij}^R(M)= \dim_k  \Tor^R_i(M,k)_j $  of $M$ is the number of copies of $R(-j)$   that appear in $F_i$  (recall  $R(-j)_i := R_{i-j}$).
   We set
$$t_i^R(M)=\sup\{ j : \beta_{ij}^R(M)\neq 0\}$$
where, by convention,  $t_i^R(M)=-\infty$ if $F_i=0$. By definition, $t_0^R(M)$ is the largest degree of a minimal generator of $M$.
An  important invariant  associated to a  minimal free resolution of $M$ as   $R$-module is  the Castelnuovo-Mumford regularity
$$\reg_R(M)=\sup\{ j-i : \beta_{ij}^R(M)\neq 0\}=\sup\{ t_i^R(M)-i : i\in \NN \}.$$
 \vskip 2mm
 It is clear that $\reg_R(M) $ can be infinite.  In the graded case we have a double opportunity: we can take advantage both of the graded $R$-structure of $M$ and of the graded $R$-structure of $gr_{\mathbb{M}}(M). $ We present explicit bounds on the graded Betti numbers of $M$ in terms of the degrees of a minimal system of generators of $M$ and of the Betti numbers of  $gr_{\mathbb{M}}(M). $ In particular we prove that if  there exists  a filtration $\mathbb{M}$ of  $M$ such that
   $\reg_{R}(gr_{\mathbb{M}}(M))< \infty, $ then $\reg_{R}(M)< \infty. $
If  $M$ is of homogeneous type with respect to $\mathbb{M}, $ then also the
converse holds, see Corollary \ref{corol}.  Example \ref{contro} shows that the assumption is necessary.

If $R$ is  graded,    the residue field $k$  has a  special behaviour.    Avramov and Peeva in \cite{AP} proved that  either $k$ admits a linear resolution, that is  $ \reg_R(k)=0, $  or  $\reg_R(k) $ is  infinite.  Following the classical definition given by Priddy, if $k$ admits a linear resolution, we say that $R$ is Koszul.


For local rings  we say that $R$ is Koszul if and only if  $R^g $ is Koszul.  One can reformulate the analogous of the statement proved by Avramov and Peeva for local rings  replacing  the concept of regularity with a new invariant that can be defined  for any finitely generated module $M$ over a local ring.
 This  invariant is the
linearity defect,  denoted  $\ld_R(M).$  This notion
was introduced by Herzog and Iyengar in \cite{HI} and studied further by Iyengar and
Roemer in \cite{IR},  see also  Section 3.  The linearity defect measures how far $M^g$ is  from having a linear resolution.   Notice that if  $\ld_R(M)< \infty,  $ then the Poincar\'{e} series   $P_M(t) =\sum_{i \ge 0} \beta_i(M) t^i $ of $M$ is rational sharing all modules a common  denominator depending only on $R^g$, see \cite[1.8]{HI}.
 \vskip 2mm
 In \cite[1.14]{HI} the following challenging question has been stated.
\begin{question}
\label{loc1} If  $\ld_R(k)< \infty $  does it follow  that  $\ld_R(k)=0$?  \end{question}
In the graded case $\ld_R(k)< \infty $ implies $\reg_R(k) < \infty, $ then $R$ is Koszul   as before mentioned by the result of Avramov and Peeva.  In the local case the problem is still open and it is one of the aims  of our investigation. Interesting answers are given in a recent paper of \c{S}ega  for certain classes of local rings, see \cite{S-lin}.
 \vskip 2mm
The above question  is equivalent  to prove that $\ld_R(k)< \infty $ implies $\ld_{R^g}(k)< \infty. $    In analogy, since $k=k^g, $  a  more general question arises:
\begin{question}
\label{loc2}
Let $M$ be a finitely generated $R$-module.  If   $\ld_R(M)< \infty $  does it follow that  $\ld_{R^g}(M^g)< \infty $? \end{question}

 In general the  answer to Question \ref{loc2} is negative,  nevertheless this does not disprove  Question \ref{loc1}.  For instance,    Example \ref{contro}   presents  a  graded module of finite projective dimension, hence of finite linearity defect,  but  the regularity of  $M^g$ is infinite,  hence  the linearity defect of $M^g$ is infinite by  \cite[1.2]{HI}. In view of giving  an answer to Question \ref{loc1}, since $k$ is a cyclic $R$-module,  could be  interesting to analyze also Example \ref{cyclic} which provides a cyclic module  with same features.

\vskip 2mm

  In Proposition \ref{ld}  we prove that if  $\ld_{R}(M)=d<\infty, $ then $$\reg_{R}(M)=\max\{t_{i}(M)-i: 0\leq i\leq d\}  $$   as  expected specification of   \cite[1.9]{HI} where it is proved that if $\ld_{R}(M) <\infty, $ then  $\reg_{R}(M) <\infty. $
In particular, if $M$ is a Koszul $R$-module,  then $\reg_{R}(M)=t_{0}(M).$  Theorem  \ref{ld- local} presents the analogous result in the local case.  We   prove that if $M$ is a module of homogeneous type with respect to the  filtration $\mathbb{M},  $ then $\ld_{R}(M)=d<\infty  $ implies   $\reg_{R^g}(gr_{\mathbb{M}}(M)) =\max\{t_{i}(gr_{\mathbb{M}}(M))-i: 0\leq i\leq d\}.$ This allow us to present   a positive answer to Question 1 when $k$ is of homogeneous type with respect to a stable filtration $\mathbb{M},  $ see Proposition \ref{k}.

\smallskip
\section{Preliminaries on filtered modules}
We refer to \cite{RS} and \cite{RV-Lect}  for more facts concerning filtered modules and filtered complexes. We include in this section  the results and we fix the definitions  that will be used in the present paper.
\vskip 2mm
   If N is a submodule of an $R$-module $M$, by Artin-Rees lemma, the collection
$\{N\cap \mathfrak{F}_{p}M\mid p\geq 0\}$ is an  $\m$-filtration of $N$ and we denote  $gr_{\mathbb{M}}(N)$   the associated graded module induced by $\mathbb{M}  $ on $N.$ Since
\begin{equation}\label{g-sub}
(N\cap \mathfrak{F}_{p}M)/(N\cap \mathfrak{F}_{p+1}M)\cong (N\cap \mathfrak{F}_{p}M+\mathfrak{F}_{p+1}M)/\mathfrak{F}_{p+1}M,
\end{equation}
$gr_{\mathbb{M}}(N)$ is a graded submodule of $gr_{\mathbb{M}}(M). $
If $x\in M\setminus {0}$, we denote by $v_{\mathbb{M}}(x) $ the largest integer $p$
such that $x\in  \mathfrak{F}_{p}M $  (the so-called valuation of $x$ with respect to $\mathbb{M}$) and we
denote by $x^{*}$ or $gr_{\mathbb{M}}(x)$ the residue class of $x$ in $\mathfrak{F}_{p}M/\mathfrak{F}_{p+1}M$
where $p=v_{\mathbb{M}}(x)$. If $x=0,$ we set $v_{\mathbb{M}}(x)=+\infty$.\\
\indent Using (\ref{g-sub}), it is clear that $$gr_{\mathbb{M}}(N)=\langle gr_{\mathbb{M}}(x): x\in N\rangle.$$
\begin{defen}
Let $M$ be a  filtered $R$-module. A subset $S=\{f_{1},\cdots ,f_{s}\}$ of
$M$ is called a \textbf{standard basis} of $M$ with respect to ${\mathbb{M}} $ if
$$gr_{\mathbb{M}}(M)=\langle gr_{\mathbb{M}}(f_{1}),\cdots,gr_{\mathbb{M}}(f_{s})\rangle.$$

\noindent If any proper subset of $S$ is not a standard basis of $M$,
we call $S$ a \textbf{minimal standard basis}.
\end{defen}

We remark that the number of generators and the corresponding valuations of a minimal standard basis depend only on $M$ and
${\mathbb{M}}. $
\begin{defin}
Let $M$ and $N$ be filtered $R$-modules and $f:M\rightarrow N$ be
a  homomorphism of $R$-modules. Then  $f$ is said to be a homomorphism of filtered
modules if $f(\mathfrak{F}_{p}M)\subseteq \mathfrak{F}_{p}N$ for every $p\geq 0$ and $f$
is said to be \textbf{strict} if $f(\mathfrak{F}_{p}M)=f(M)\cap \mathfrak{F}_{p}N$ for every $ p\geq 0$.
\end{defin}

 Let $F=\bigoplus_{i=1}^{s}Re_{i}$ be a free $R$-module of
 rank $s$ and $v_{1},\cdots,v_{s}$ be integers. We define
 the filtration $\mathbb{F}=\{\mathfrak{F}_{p} F: p\in\mathbf{Z}\}$ on $F$ as follows
\begin{equation}\label{special}   \mathfrak{F}_{p} F :=\bigoplus_{i=1}^{s}\m^{p-v_{i}}e_{i}=\{(a_{1},\cdots,a_{s}):a_{i}\in \m^{p-v_{i}}\}. \end{equation}
 If $i\leq 0$ we set $\m^{i}=R.$
 We denote the filtered free $R$-module $F$ by $\bigoplus_{i=1}^{s}Rv_{i}$
 and we call it special filtration  on $F$. It is clear that the filtration $\mathbb{F}$
 defined above
 is an $\m$-stable filtration.

 If  $({\bf{F}}, d.) $ is a complex of finitely generated free $R$-modules ($d_i : F_i \to F_{i-1} $ denotes  the $i$-th differential map),  {\it{ a special filtration}}  on  {\bf{F}}  is a special filtration on each $F_i $ that makes $({\bf{F}}, d.) $ a filtered  complex.

\vskip 2mm

\indent Let $M$ be a $R$-module equipped with the filtration ${\mathbb{M}}  $   and let   $S=\{f_{1},\cdots,f_{s}\}$
be a system of elements of $M$ with  $v_{\mathbb{M}}(f_{i})$   the
corresponding valuations. Let $F=\bigoplus_{i=1}^{s}Re_{i}$ be a free $R$-module of rank $s$
equipped with the filtration $\mathbb{F}$ where
$v_{i}=v_{\mathbb{M}}(f_{i})$. Then we denote the filtered free
$R$-module $F$ by $\bigoplus_{i=1}^{s}Rv_{\mathbb{M}}(f_{i})$
and hence $v_{\mathbb{\mathbb{F}}}(e_{i})=v_{\mathbb{M}}(f_{i}).$\\

Let $d  :F\rightarrow M$ be a morphism of filtered $R$-modules defined by
\begin{equation} \label{d}  d(e_{j})=f_{j}. \end{equation}
It is clear that $d  $ is morphism of filtered $R$-modules and
$gr_{\mathbb{F}}(F)$ is isomorphic to the graded free $R^{g}$-module
$\bigoplus_{i=1}^{s}R^{g}(-v_{\mathbb{M}}(f_{i}))$ with a basis
$(e_{1},\cdots ,e_{s})$ where $\deg(e_{i})=v_{\mathbb{M}}(f_{i}).$
In particular $d $ induces a natural graded morphism (of degree zero)
$gr(d):gr_{\mathbb{F}}(F)\rightarrow gr_{\mathbb{M}}(M)$
sending $e_{j}$ to $gr_{\mathbb{M}}(f_{j}).$\\

In particular   $ gr ( \dot ) $ is a
functor from the category of the filtered $R$-modules into the
category of the graded $R^g$-modules. \\
 \indent Let $c=^{t}(c_{1},\cdots,c_{s})$ be an element of $F$ and $^t( \ \ )$ denotes the transposed vector.
By the definition of the filtration $\mathbb{F}$
on $F $, we have
\[v_{\mathbb{F}}(c)=\min\{v_{R}(c_{i})+v_{\mathbb{M}}(f_{i})\}\leq v_{\mathbb{M}}(d(c))\]
 Set
 $gr_{\mathbb{F}}(c)=^{t}(c^{'}_{1},\cdots,c^{'}_{s})$ and $v=v_{\mathbb{F}}(c)$, then

\begin{equation}\label{gr}
c^{'}_{i} =
\begin{cases} gr_{\m}(c_{i}) &\text{if}\ v_{R}(c_{i})+v_{\mathbb{M}}(f_{i})=v\\
0&\text{if}\ v_{R}(c_{i})+v_{\mathbb{M}}(f_{i})>v
\end{cases}
\end{equation}\\

We have a canonical embedding $gr_{\mathbb{F}}(\Ker(d ) ) \rightarrow \Ker(gr(d)).$
 If $gr(d)$ is surjective,
then $ d $ is a strict surjective homomorphism, equivalently  $$gr_{\mathbb{F}}(\Ker(d) ) = \Ker(gr(d)).$$
With the previous notation,  extending a result  of   Robbiano and Valla in \cite{RV}, Shibuta   characterized
 the  standard bases of a filtered module  (see \cite{Shibuta}).

\begin{thm}\label{stand-base}
Let $M$ be a  filtered $R$-module and  $ \{f_{1},\cdots ,f_{s}\}$ elements  of
$M$. The following facts are equivalent:
\begin{enumerate}
\item $\{f_{1},\cdots ,f_{s}\}$ is a standard basis of $M$ with respect to $\mathbb{M}.$
\item $\{f_{1},\cdots ,f_{s}\}$ generates $M$ and $ d $ is strict.
\item $\{f_{1},\cdots ,f_{s}\}$ generates $M$ and $gr_{\mathbb{F}}(\Ker(d ) ) = \Ker(gr(d )).$
\end{enumerate}
\end{thm}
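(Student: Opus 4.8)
The plan is to prove Theorem~\ref{stand-base} by establishing the cycle of implications $(1)\Rightarrow(2)\Rightarrow(3)\Rightarrow(1)$, exploiting the canonical embedding $gr_{\mathbb{F}}(\Ker(d))\hookrightarrow\Ker(gr(d))$ and the elementary inequality $v_{\mathbb{F}}(c)\le v_{\mathbb{M}}(d(c))$ recorded before the statement. The key observation underlying everything is that $gr(d)$ is surjective if and only if $d$ is strict and surjective; this is already noted in the excerpt, so the real work is to tie the standard-basis condition to surjectivity of $gr(d)$ and then to the equality $gr_{\mathbb{F}}(\Ker(d))=\Ker(gr(d))$.

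For $(1)\Rightarrow(2)$, I would first note that if $\{f_1,\dots,f_s\}$ is a standard basis then $gr_{\mathbb{M}}(M)=\langle gr_{\mathbb{M}}(f_1),\dots,gr_{\mathbb{M}}(f_s)\rangle$, which says exactly that $gr(d)$ is surjective since $gr(d)$ sends the basis element $e_j$ to $gr_{\mathbb{M}}(f_j)$. Surjectivity of $gr(d)$ then forces $d$ to be a strict surjective homomorphism by the statement recalled just above the theorem; in particular $\{f_1,\dots,f_s\}$ generates $M$. The implication $(2)\Rightarrow(3)$ is the heart of the argument: assuming $d$ is strict, I want to show $gr_{\mathbb{F}}(\Ker(d))=\Ker(gr(d))$. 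The inclusion $\subseteq$ is the canonical embedding, always valid. For $\supseteq$, take $\xi\in\Ker(gr(d))$ homogeneous of degree $v$; write $\xi=gr_{\mathbb{F}}(c)$ for some $c\in F$ with $v_{\mathbb{F}}(c)=v$ (this is possible because $gr(d)$ only sees homogeneous elements coming from $F$, and formula~(\ref{gr}) tells us the relationship between $c$ and its leading form). Then $gr(d)(\xi)=0$ means $gr_{\mathbb{M}}(d(c))$ has valuation strictly greater than $v$, i.e. $v_{\mathbb{M}}(d(c))>v=v_{\mathbb{F}}(c)$. By strictness, $d(\mathfrak{F}_{v+1}F)=d(F)\cap\mathfrak{F}_{v+1}M$, so there is $c'\in\mathfrak{F}_{v+1}F$ with $d(c')=d(c)$; then $c-c'\in\Ker(d)$, and since $v_{\mathbb{F}}(c')\ge v+1>v$ we get $v_{\mathbb{F}}(c-c')=v$ and $gr_{\mathbb{F}}(c-c')=gr_{\mathbb{F}}(c)=\xi$. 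Hence $\xi\in gr_{\mathbb{F}}(\Ker(d))$, proving the reverse inclusion.

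For $(3)\Rightarrow(1)$, assume $\{f_1,\dots,f_s\}$ generates $M$ and $gr_{\mathbb{F}}(\Ker(d))=\Ker(gr(d))$. I would argue that $gr(d)$ is surjective: since $\{f_1,\dots,f_s\}$ generates $M$, $d$ is surjective, so there is a short exact sequence $0\to\Ker(d)\to F\to M\to0$ which (with the induced filtrations) yields an exact sequence $gr_{\mathbb{F}}(\Ker(d))\to gr_{\mathbb{F}}(F)\to gr_{\mathbb{M}}(M)\to0$ at the level of associated graded modules — right-exactness of $gr(\cdot)$ on a surjection is standard, since every class in $\mathfrak{F}_pM/\mathfrak{F}_{p+1}M$ is hit by an element of $\mathfrak{F}_pF$ that maps into $\mathfrak{F}_pM$. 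Thus $gr(d)$ is onto, whence $gr_{\mathbb{M}}(M)=\langle gr_{\mathbb{M}}(f_1),\dots,gr_{\mathbb{M}}(f_s)\rangle$, i.e. $\{f_1,\dots,f_s\}$ is a standard basis. (Here the hypothesis $gr_{\mathbb{F}}(\Ker(d))=\Ker(gr(d))$ is not even needed for this direction, but it is what makes the three conditions genuinely equivalent together with the converse directions; alternatively one chases: $\coker(gr(d))$ vanishes because $\Ker(gr(d))/gr_{\mathbb{F}}(\Ker(d))=0$ identifies the relevant homology.)

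The main obstacle I anticipate is the precise bookkeeping in $(2)\Rightarrow(3)$, specifically justifying that an arbitrary homogeneous element of $\Ker(gr(d))$ is of the form $gr_{\mathbb{F}}(c)$ for a genuine $c\in F$ with the right valuation — one must use the explicit description~(\ref{gr}) of leading forms in the special filtration $\mathbb{F}$, lifting each homogeneous coordinate $c'_i\in R^g(-v_{\mathbb{M}}(f_i))_v=(\m^{v-v_i}/\m^{v-v_i+1})$ to an element of $\m^{v-v_i}$, and checking that the resulting $c$ has $v_{\mathbb{F}}(c)$ exactly $v$ and not larger. Once that lifting is in hand, the strictness hypothesis does the rest cleanly. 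The other implications are essentially formal consequences of facts already assembled in the excerpt, so I would present them briefly.
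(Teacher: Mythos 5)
The paper does not actually prove Theorem~\ref{stand-base}: it quotes the result from Shibuta (extending Robbiano--Valla), so there is no internal proof to measure you against. Your implications $(1)\Rightarrow(2)$ and $(2)\Rightarrow(3)$ are correct. The first is exactly the fact recorded just before the statement (surjectivity of $gr(d)$ forces $d$ to be strict and surjective), and your argument for $(2)\Rightarrow(3)$ --- represent a homogeneous $\xi\in\Ker(gr(d))$ of degree $v$ by some $c\in\mathfrak{F}_{v}F\setminus\mathfrak{F}_{v+1}F$, use strictness in degree $v+1$ to find $c'\in\mathfrak{F}_{v+1}F$ with $d(c')=d(c)$, and note $c-c'\in\Ker(d)$ with $gr_{\mathbb{F}}(c-c')=\xi$ --- is sound; the lifting issue you flag is harmless, since any representative of a nonzero class in $\mathfrak{F}_{v}F/\mathfrak{F}_{v+1}F$ automatically has valuation exactly $v$.

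The implication $(3)\Rightarrow(1)$, however, is wrong as written. You assert that surjectivity of $d$ alone yields surjectivity of $gr(d)$ (``right-exactness of $gr(\cdot)$ on a surjection''), and you conclude that the hypothesis $gr_{\mathbb{F}}(\Ker(d))=\Ker(gr(d))$ is not even needed. The functor $gr$ is \emph{not} right exact on filtered surjections: a class in $\mathfrak{F}_{p}M/\mathfrak{F}_{p+1}M$ is hit by some $c\in F$ with $d(c)=m$, but nothing forces $c$ to lie in $\mathfrak{F}_{p}F$; producing such a $c$ is precisely the content of strictness. If generation alone implied the standard basis property, every generating set would be a standard basis, which already fails for $I=(x^{2}-y^{3},\,xy)\subseteq k[[x,y]]$ with the induced $\m$-adic filtration: the initial forms $x^{2},xy$ do not generate $gr(I)$, which contains $y^{4}$, the initial form of $y(x^{2}-y^{3})-x(xy)=-y^{4}$. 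A correct proof of $(3)\Rightarrow(1)$ must use the kernel hypothesis, for instance by successive approximation: given $m$ with $v_{\mathbb{M}}(m)=p$ and any $c$ with $d(c)=m$, if $q:=v_{\mathbb{F}}(c)<p$ then $gr_{\mathbb{F}}(c)\in\Ker(gr(d))_{q}=gr_{\mathbb{F}}(\Ker(d))_{q}$, so there is $k\in\Ker(d)\cap\mathfrak{F}_{q}F$ with $c-k\in\mathfrak{F}_{q+1}F$ and $d(c-k)=m$; since every preimage of $m$ has valuation at most $p$, finitely many such corrections yield a preimage of valuation exactly $p$, whence $gr(d)$ is surjective and $gr_{\mathbb{M}}(f_{1}),\dots,gr_{\mathbb{M}}(f_{s})$ generate $gr_{\mathbb{M}}(M)$.
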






\smallskip
\section{Regularity of  filtered graded modules}
 Let  $R$ be  a standard graded algebra over a field $k$ with homogeneous maximal ideal $\m$ and let $M$ be  a filtered  graded  $R$-module, that is a graded module $M= \oplus_{i\ge0}  M_i$ equipped with a $\m$-stable  filtration  ${\mathbb{M}}.$ In this  section we will present  a  comparison between the regularity of  $M$ and the regularity of  $ gr_{\mathbb{M}}(M). $ The approach is elementary and our investigation is  mainly based on the theory of standard bases.
  \vskip 2mm
In the  graded module $M $  we say that an element $x$ is homogeneous if  $x\in M_i   $ for some $i.$
\begin{defin}
Let $M$ be a filtered  \textbf{graded}   $R$-module and let $S=\{f_{1},\cdots ,f_{s}\}$
be a standard basis of $M$. $S$ is   a \textbf{homogeneous standard basis}
of $M$ if $f_{i}$ is a homogeneous element of $M$
for every $1\leq i\leq s$.

\noindent The filtration  $\mathbb{M}=\{\mathfrak{F}_{p}M\}_{p\ge 0} $ is   a \textbf{graded filtration} if   $ \mathfrak{F}_{p}M$ is a graded submodule  of $M $ for each $p\geq 0. $  \end{defin}

  For example the $\m$-adic filtration
$\{\m^{p}M\}_{p\geq 0}$ is a graded filtration.
When $\mathbb{M}$ is a graded filtration on $M$, then
 $M$ admits  a homogeneous standard basis.
In general a filtered graded module
has not necessarily  a homogeneous standard basis. For instance, let $k$ be a field, set $R=k[X,Y]/(X^{3},Y^{4})$, we write $x$ and $y$ for the residue class of $X$ and $Y$ in $R$. Set  $\m=(x,y)$ and $I=\mathfrak{F}_{0} I=(x^{2},y^{3})$, $\mathfrak{F}_{1} I =(y^{3}+x^{2})$, $\mathfrak{F}_{p} I=\m F_{p-1}I $ for $p\geq 2$ . Note that $\m \mathfrak{F}_{0} I=(xy^{3},x^{2}y)$ and  we have $\m \mathfrak{F}_{0}I \subset \mathfrak{F}_{1} I$. So $\mathbb{I}=\{\mathfrak{F}_{p} I\}_{p\geq 0}$ is an $\m$-stable filtration on $I$. In this case  $I$ has no a  homogeneous standard basis with respect to $\mathbb{I}. $

 \vskip 2mm

From now on if $M$ is a graded module, a filtration on $M$ will be always a  graded filtration.
\vskip 2mm
Let $N$ be a
 graded $R$-module equipped with the filtration $\mathbb{N}=\{\mathfrak{F}_pN\}_{p\ge 0}.$   For every non-zero homogeneous element $x \in N$ we have  two integers attached to $x.$    We say that $x$ has degree $i$ and we write $\deg(x)=i $  if $x \in N_i$   and we say that $x$ has valuation $p=v_{\mathbb{N}}(x) $ if $ x \in \mathfrak{F}_pN \setminus \mathfrak{F}_{p+1}N.$

 If  $\{x_{1},\cdots,x_{n}\}$ is a
 minimal homogeneous generating set of $N, $
 we denote by $\D(N)$  the set
 $\{\deg(x_{i}): 0\leq i\leq n\}$. This set is uniquely determined
 by $N$. If $N=(0),$ we set $\D(N)=\varnothing.$
Let $\{f_{1},\cdots,f_{s}\}$ be a minimal homogeneous standard basis of $N$
with respect to  $\mathbb{N}$. We set\\
\[\Delta_{\mathbb{N}}(N)=\{\deg(f_{j})-v_{\mathbb{N}}(f_{j}): 1\leq j \leq s\}\]
and
\[v_{\mathbb{N}}(N)=\max \Delta_{\mathbb{N}}(N)\]
\[u_{\mathbb{N}}(N)=\min \Delta_{\mathbb{N}}(N)\]\\
When  $\mathbb{N}$ is the $\m$-adic filtration
 then $\{f_{1},\cdots,f_{s}\}$ is also a minimal homogeneous generating
 set of $N$ and we have $v_{\mathbb{N}}(f_{j})=0$. Hence one has
\[\Delta_{\mathbb{N}}(N)=\D(N)\] and
\[\ v_{\mathbb{N}}(N)=t_{0}(N),\ \ u_{\mathbb{N}}(N)=\indeg(N).\]

\vskip 2mm
The following result completes  a well known comparison between the numerical invariants of a graded free resolution of $ gr_{\mathbb{M}}(M)  $  as $R^g$-module and a free resolution of $M$ as $R$-module. The local version of the first part of the proposition   holds for any commutative ring $R $ (see \cite[2.4]{S}), we insert here a refinement  in the case of graded modules.

\begin{prop}\label{free-res}
Let  $R$ be  a standard graded $k$-algebra and let $M$ be  a filtered  graded  $R$-module equipped with an  $\m$-stable graded  filtration  ${\mathbb{M}}.$   Then there exists a \textbf{graded}  free resolution
 $\mathbf{F}  $ of $M$ with a special filtration $\mathbb{F}$ on it such that
$\mathbf{G}:=gr_{\mathbb{F}}(\mathbf{F})$ is a minimal
graded free resolution of $gr_{\mathbb{M}}(M) $ as $R$-module.
\vskip 2mm
\noindent Denote by $d_i : F_i \to F_{i-1} $ (resp. $\delta_i : G_i \to G_{i-1}$)  the differential maps  of $\mathbf{F}  $ (resp. $\mathbf{G}$) and by $\mathbb{F}_i $ the special filtration on $F_i.  $  Then for  all $i \ge 0$ we have:

\begin{enumerate}
\item [$(i)$] $gr_{\mathbb{F}_{i}}(d_{i})=\delta_{i} $
\item [$(ii)$] $\Ker(d_{i})$ admits a homogeneous standard basis with respect to $\mathbb{F}_i  $
\item[$(iii)$] $\Ker({\delta_{i}})=gr_{\mathbb{F}_{i}}(\Ker(d_{i})) $
\item[$(iv)$] $\Delta_{\mathbb{F}_{i}}(\Ker(d_{i})) \subseteq \Delta_{\mathbb{M}}(M)$ .
\end{enumerate}

\end{prop}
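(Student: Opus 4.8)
The plan is to construct the resolution $\mathbf{F}$ of $M$ inductively, mirroring the minimal graded free resolution $\mathbf{G}$ of $gr_{\mathbb{M}}(M)$, and to verify the four properties at each stage of the induction. First I would recall the basic construction (as in \cite[2.4]{S}, \cite{Shibuta}): start with a minimal homogeneous standard basis $\{f_1,\dots,f_s\}$ of $M$ with respect to $\mathbb{M}$, which exists because $\mathbb{M}$ is a graded filtration. Taking $F_0=\bigoplus_{j} R v_{\mathbb{M}}(f_j)$ with its special filtration $\mathbb{F}_0$, the map $d_0=d:F_0\to M$ sending $e_j\mapsto f_j$ is a morphism of filtered modules, and since $\{f_j\}$ is a standard basis, by Theorem \ref{stand-base} the map $d$ is strict and $gr_{\mathbb{F}_0}(\Ker d_0)=\Ker(gr(d_0))=\Ker(\delta_0)$, where $gr(d_0)=\delta_0:G_0\to gr_{\mathbb{M}}(M)$ is the first map in $\mathbf{G}$ (it is the map onto the module, so $\Ker\delta_0=\Image\delta_1$). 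Moreover, because $M$ is graded and each $f_j$ is homogeneous of degree $\deg(f_j)$, one assigns to the basis vector $e_j$ of $F_0$ the internal degree $\deg(f_j)$; this makes $F_0$ a graded free $R$-module and $d_0$ a graded map of degree zero. With this bookkeeping, $G_0=gr_{\mathbb{F}_0}(F_0)\cong\bigoplus_j R(-v_{\mathbb{M}}(f_j))$ as needed, and property $(i)$ for $i=0$ is the statement $gr_{\mathbb{F}_0}(d_0)=\delta_0$, which holds by definition.

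The inductive step is the heart of the argument. Suppose we have built $F_0,\dots,F_i$ with special graded filtrations $\mathbb{F}_0,\dots,\mathbb{F}_i$ and graded differentials $d_0,\dots,d_i$ satisfying $(i)$–$(iv)$ up to stage $i$, and in particular $\Ker(\delta_i)=gr_{\mathbb{F}_i}(\Ker d_i)$. Now $\Ker\delta_i=\Image\delta_{i+1}$ is generated by the columns of $\delta_{i+1}$; lift each such generator — a homogeneous element of $\Ker\delta_i$ — to an element of $\Ker d_i\subseteq F_i$ having the prescribed valuation and internal degree. The key point is that, since $\Ker(\delta_i)=gr_{\mathbb{F}_i}(\Ker d_i)$, every homogeneous generator $\bar g$ of $\Ker\delta_i$ is of the form $gr_{\mathbb{F}_i}(g)$ for some homogeneous $g\in\Ker d_i$ with $v_{\mathbb{F}_i}(g)$ equal to the degree of $\bar g$ in $G_i$ and $\deg(g)$ equal to the internal degree of $\bar g$; this says precisely that these lifts $g_1,\dots,g_r$ form a homogeneous standard basis of $\Ker d_i$ (property $(ii)$ at stage $i$). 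One then sets $F_{i+1}=\bigoplus_\ell R v_{\mathbb{F}_i}(g_\ell)$ with the corresponding special graded filtration $\mathbb{F}_{i+1}$ (internal degree of the $\ell$-th basis vector $:=\deg(g_\ell)$), and defines $d_{i+1}:F_{i+1}\to F_i$ by sending the $\ell$-th basis vector to $g_\ell$. By Theorem \ref{stand-base} applied to the filtered module $\Ker d_i$, the map $d_{i+1}$ onto $\Ker d_i$ is strict, $gr_{\mathbb{F}_{i+1}}(\Ker d_{i+1})=\Ker(gr(d_{i+1}))$, and $gr_{\mathbb{F}_{i+1}}(d_{i+1})=\delta_{i+1}$ as a graded map $G_{i+1}\to G_i$ with image $\Ker\delta_i$. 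This gives $(i)$ and $(iii)$ at stage $i+1$ and $(ii)$ at stage $i$; continuing the induction yields $(ii)$ at every stage. That $\mathbf{F}$ is a resolution of $M$ (exactness) follows because at each step $d_{i+1}$ surjects onto $\Ker d_i$ by construction; that $\mathbf{G}=gr_{\mathbb{F}}(\mathbf{F})$ is a \emph{minimal} resolution of $gr_{\mathbb{M}}(M)$ holds because $\mathbf{G}$ was chosen minimal to begin with, and by $(i)$ the maps of $gr_{\mathbb{F}}(\mathbf{F})$ are exactly the $\delta_i$.

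It remains to establish $(iv)$: $\Delta_{\mathbb{F}_i}(\Ker d_i)\subseteq\Delta_{\mathbb{M}}(M)$ for all $i$. The numbers in $\Delta_{\mathbb{F}_i}(\Ker d_i)$ are $\deg(g_\ell)-v_{\mathbb{F}_i}(g_\ell)$ over the minimal homogeneous standard basis of $\Ker d_i$; equivalently, by $(iii)$ and the identification $G_i\cong\bigoplus R^g(-v_\bullet)$, these equal the numbers $j-p$ for which $(\Ker\delta_i)_{(p)}$ has a minimal generator in internal degree $j$, i.e. the shifts appearing in $G_{i+1}$ relative to $G_i$. So $(iv)$ becomes a purely graded statement about the minimal resolution $\mathbf{G}$ of $gr_{\mathbb{M}}(M)$: the degree shifts at each homological step, measured against the generator degrees of $G_i$, never exceed those of $G_0$ — and $\Delta_{\mathbb{M}}(M)$ is, by the discussion preceding the proposition, exactly the set of numbers $\deg(f_j)-v_{\mathbb{M}}(f_j)$, which records the internal-degree/shift data of $G_0\cong\bigoplus R^g(-v_{\mathbb{M}}(f_j))$, relative to the degrees of the generators of $M$. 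I would prove this by downward comparison along the resolution: a minimal generator of $\Ker\delta_i$ sits inside $G_i$, whose generators have internal degrees in $\D(M)$ shifted by the $v$-data, and a syzygy's internal degree exceeds that of the generators it involves by at least the corresponding entry — chasing this bound from step to step keeps everything controlled by the stage-zero data $\Delta_{\mathbb{M}}(M)$. The main obstacle I anticipate is precisely this last bookkeeping in $(iv)$: one must be careful to compare the \emph{two} gradings (internal degree and valuation) consistently through the functor $gr$, since the special filtration mixes them, and to make sure the inductive inequality is stated with the right normalization so that it closes up rather than drifting. The rest is a fairly mechanical iteration of Theorem \ref{stand-base}.
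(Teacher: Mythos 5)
Your construction of $\mathbf{F}$ and the verification of $(i)$--$(iii)$ follow essentially the paper's route: take a minimal homogeneous standard basis of $M$, apply Theorem \ref{stand-base} to get strictness of $d_0$ and $gr_{\mathbb{F}_0}(\Ker(d_0))=\Ker(\delta_0)$, note that $\Ker(d_0)$ is a graded submodule whose induced filtration is graded (hence admits a homogeneous standard basis), and iterate on the successive kernels. That part is sound, and whether one regards $\mathbf{G}$ as given and lifted, or as produced alongside $\mathbf{F}$ from minimal standard bases of the kernels, is immaterial.

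The gap is in $(iv)$. This is a \emph{membership} statement: each difference $\deg(g_\ell)-v_{\mathbb{F}_i}(g_\ell)$ must \emph{equal} some $\deg(f_j)-v_{\mathbb{M}}(f_j)$. Your proposed mechanism --- ``a syzygy's internal degree exceeds that of the generators it involves by at least the corresponding entry,'' chased along the resolution --- yields only inequalities, and your reduction to ``a purely graded statement about $\mathbf{G}$'' is not available: $v_{\mathbb{F}_i}(g_\ell)$ is indeed read off from $\mathbf{G}$, but $\deg(g_\ell)$ is the internal degree of the chosen lift inside $F_i$, which is data of $\mathbf{F}$, not of $\mathbf{G}$. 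The missing idea is the explicit description (\ref{gr}) of initial forms in a special-filtered free module: if $g={}^t(u_{1},\dots,u_{r})\in F_{i}$ is homogeneous and $c_{j}$ is a nonzero coordinate of $gr_{\mathbb{F}_{i}}(g)$, then $c_{j}=u_{j}$ exactly (for a homogeneous element of a standard graded algebra the $\m$-adic initial form is the element itself, and homogeneity of $g$ forces the coordinate realizing the valuation to survive unchanged). Hence $v_{\mathbb{F}_{i}}(g)=\deg(c_{j})+a_{ij}$ while $\deg(g)=\deg(u_{j})+a'_{ij}$, so $\deg(g)-v_{\mathbb{F}_{i}}(g)=a'_{ij}-a_{ij}$, an exact equality with a difference already present at the previous stage; induction then places it in $\Delta_{\mathbb{M}}(M)$. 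Without this exact propagation (the paper's equations (\ref{shift-eq2}) and (\ref{shift-eq})), the bookkeeping you describe does not close up, and $(iv)$ --- on which Theorem \ref{shift-inq} and everything downstream depends --- is not established.
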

\begin{proof}
\indent  Let $f_{1},\cdots,f_{\beta_{0}}$ be a minimal
homogeneous standard basis of $M$ with respect to the filtration
$\mathbb{M}$. Set  $a^{'}_{0 j}:=\deg(f_{j})$ and
$a_{0 j}:= v_{\mathbb{M}}(f_{j})$.   Then $\Delta_{\mathbb{M}}(M)=\{ a^{'}_{0 j}-a_{0 j}: j=1,\dots, \beta_{0}\}$.

\noindent Define  $F_{0}=\bigoplus_{j=1}^{ \beta_{0}}R(-a^{'}_{0 j})$
the graded $R$-free module of rank $\beta_{0}$ equipped with the special filtration $\mathbb{F}_0$ with
respect to  the integers $v_{\mathbb{M}}(f_{1}),\cdots,v_{\mathbb{M}}(f_{\beta_{0}})$ (as  defined in (\ref{special})),
hence $v_{\mathbb{F}}(e_{0j})=v_{\mathbb{M}}(f_{j}). $  We define
\[d_{0}:F_{0}\rightarrow M\]
to be the homogeneous homomorphism
such that $d_{0}(e_{0 j})=f_{j}$. We remark that $R^g=R $ and we  define
\[\delta_{0}:G_{0}=\bigoplus_{j=1}^{\beta_{0}}R(-a_{0 j})\rightarrow gr_{\mathbb{M}}(M)\]
with $\delta_{0}(e_{0 j})=gr_{\mathbb{M}}(f_{j})$.
We have  the following diagram:
\begin{equation}
\xymatrix{
0 \ar[r] &L=\Ker(d_0)  \ar[r]&  \bigoplus_{j=1}^{\beta_0}R(-a^{'}_{0 j}) \ar[d]^{gr_{\mathbb{F}}}\ar[r]^-{d_0} &M\ar[d]^{gr_{\mathbb{M}}}  \\
0 \ar[r] &K=\Ker (gr(d_0))  \ar[r]&  \bigoplus_{j=1}^{\beta_0}R(-v_{\mathbb{M}}(f_{i}))
 \ar[r]^-{\delta_0=gr(d_0)} &gr_{\mathbb{M}}(M)}
\end{equation}

Since $\{f_{1},\cdots,f_{\beta_{0}}\}$ is a standard basis of $M$,
it generates $M$. Hence  $d_{0}$ is surjective
and strict.  In particular,  by Theorem \ref{stand-base},     $gr_{\mathbb{F}_{0}}(\Ker(d_{0}))= \Ker(\delta_{0})$.

The special filtration $\mathbb{F}_0$
is a graded filtration on the graded free $R$-module
$F_0$. Since $d_0$ is homogeneous,  then $L=\Ker(d_0) $ is a
graded submodule of $F_0 $ and
the induced filtration on $L, $ that is ${\mathbb{F}_{0}}\cap L $ is a graded filtration.
Therefore $L=\Ker(d_0) $ admits  a homogeneous standard basis with respect to the
induced filtration. Note that
$gr_{\mathbb{F}_{0}}(e_{0 j})=e_{0 j}$ and   we may write\\
\[\deg(f_{j})= a^{'}_{0 j}=a_{0 j}+c   \]\\
with  $c=\deg(f_{j})-v_{\mathbb{M}}(f_{j}) \in\Delta_{\mathbb{M}}(M).$ Hence  $(i)$, $(ii)$, $(iii)$ are satisfied for $i=0.$
To prove $(iv)$, let $\{h_{1},\cdots,h_{\beta_{1}}\}$ be a minimal homogeneous standard basis of $\Ker(d_0)$. Assume that
$gr_{{\mathbb{F}}_0}(h_i)=(x_{i 1},\cdots,x_{i\beta_0})$. There exists $j$ such that $x_{ij}\neq 0$, so
$$\deg(gr_{{\mathbb{F}}_0}(h_i))= \deg(x_{ij})+a_{0 j}.$$ Let $h_i=(y_{i 1},\cdots,y_{i \beta_0})$.
Since $h_i$ is a homogeneous element of $\ker(d_0)$ by (\ref{gr}), $x_{ij}=y_{i j}$ and then  we have
$$\deg(h_i)=\deg(y_{ij})+a^{'}_{0 j}=\deg(gr_{{\mathbb{F}}_0}(h_i))+c,$$ for some $c\in \Delta_{\mathbb{M}}(M). $
Hence $\Delta_{\mathbb{F}_{0}}(\Ker(d_{0})) \subseteq \Delta_{\mathbb{M}}(M)$
\vskip 2mm
\indent  We prove the result by inductive steps on  $n>0. $   Assume we  have defined filtered
graded free modules $F_{0},\cdots,F_{n-1}$ with special filtration $\mathbb{F}_{0},\cdots,\mathbb{F}_{n-1}$ such that
\[F_{n-1}\xrightarrow{d_{n-1}}F_{n-2}\rightarrow \cdots \rightarrow F_{0}\xrightarrow{d_{0}} M\rightarrow 0\]
is a part of a graded free resolution of $M$ and for every
$i<n   $ we assume:
\begin{enumerate}
\item[$(i)$] $gr_{\mathbb{F}_{i}}(F_{i})=G_{i},\ \ gr_{\mathbb{F}_{i}}(d_{i})=\delta_{i} $
\item[$(ii)$] $\Ker(d_{i})$ has a homogeneous standard basis with respect to $\mathbb{F}_i, $
\item [$(iii)$]$ \Ker({\delta_{i}})=gr_{\mathbb{F}_{i}}(\Ker(d_{i})) $
\item[$(iv)$] $\Delta_{\mathbb{F}_{i-1}}(\Ker(d_{i-1})) \subseteq \Delta_{\mathbb{M}}(M). $
\end{enumerate}
\vskip 2mm

 Let $\{g_{1},\cdots,g_{\beta_{n}}\}$  be a minimal homogeneous
 standard basis of $\Ker(d_{n-1}).  $  We know that
  $\Ker(\delta_{n-1})=gr_{\mathbb{F}_{n-1}}(\Ker(d_{n-1}))=\langle gr_{\mathbb{F}_{n-1}}(g_{1}),\cdots,gr_{\mathbb{F}_{n-1}}(g_{\beta_{n}})\rangle.$
  Set $a^{'}_{n i}:=\deg(g_{i}) \ \text{and} \ a_{n i}:=v_{\mathbb{F}_{n-1}}(g_{i}). $
 Define now the  graded free $R$-module
$$F_{n}=\bigoplus^{\beta_{n}}_{i=1}R(-a^{'}_{n i}) $$
equipped with the special filtration $\mathbb{F}_n$ defined by the integers $v_{\mathbb{F}_{n-1}}(g_{i})$ for $i=1, \dots, \beta_n,$ and let  $$ d_{n}:F_{n}\rightarrow F_{n-1}$$
defined by  $d_{n}(e_{n i })=g_{i}. $
Consider  the graded free $R$-module
 \[G_{n}=\bigoplus^{\beta_{n}}_{i=1}R(-a_{n i}) ,\quad\ \delta_{n}:G_{n}\rightarrow G_{n-1}\]
 such that  $\delta_{n}(e_{n i })=gr_{\mathbb{F}_{n-1}}(g_{i}).$
 Then
 \[F_{n}\xrightarrow{d_{n}} F_{n-1}\xrightarrow{d_{n-1}} F_{n-2} \]
 \[G_{n}\xrightarrow{\delta_{n}} G_{n-1}\xrightarrow{\delta_{n-1}} G_{n-2}\]\\
 are exact complexes and we have $gr_{\mathbb{F}_{n}}(d_{n})=\delta_{n}.$  Since $\{g_{1},\cdots,g_{\beta_{n}}\}$ is a minimal homogeneous standard basis of $\Ker(d_{n-1}), $
  again by Theorem \ref{stand-base},
we get $\Ker(\delta_{n})=gr_{\mathbb{F}_{n}}(\Ker(d_{n}))$
and $\Ker(d_{n})$ admits  a homogeneous standard basis with respect to $\mathbb{F}_n$.

 Let $gr_{\mathbb{F}_{n-1}}(g_{i})=^{t}(c_{i 1},\cdots,c_{i \beta_{n-1}})$, there exists $j$ such that
 $c_{i j}\neq 0$ and
 \[a_{n i}=\deg(c_{i j})+a_{n-1 j}\]
 Let $g_{i}=^{t}(u_{i 1},\cdots,u_{i \beta_{n-1}})$. Since  $g_{i}$
 is a homogeneous element of $\Ker(d_{n-1})$
  by (\ref{gr}),   $u_{i j}=c_{i j}$ and
\begin{equation} \label{shift-eq2}
a^{'}_{n i}=\deg(c_{i j})+a^{'}_{n-1 j}.
\end{equation}\\
By inductive assumption  there exists $c \in\Delta_{\mathbb{M}}(M) $ such that
$a^{'}_{n-1 j}=a_{n-1 j}+c$,
thus we get
\begin{equation}\label{shift-eq}
 a^{'}_{n i}=a_{n i}+c.
 \end{equation}
  and hence $\Delta_{\mathbb{F}_n}(\Ker(d_n)) \subseteq \Delta_{\mathbb{M}}(M)   $  as required. We can repeat  the  inductive process on $\Ker(d_{n}). $
\end{proof}
\begin{rem}\label{home-typ1}  We remark that if for some integers $i, j $  we have
$Tor_i^R(M,k)_j \neq 0,  $   then $Tor_i^R(gr_{\mathbb{M}}(M)),k)_{j-c} \neq 0 $ for some $c \in \Delta_{\mathbb{M}}(M).$

\noindent In fact denote by $\mathbf{F}^{min}$   a  minimal graded free resolution
of $M,  $ since $\mathbf{F}^{min}$  is a direct summand of  $\mathbf{F}$, we get
   \[\D(F_{i}^{min})\subseteq \D(F_{i})\quad \text{for all} \ i \geq 0.\]
For more details see  also \cite[3.1]{RSha}. Hence  the remark  follows from Proposition \ref{free-res} (iv).
Always  from Proposition \ref{free-res} it  will useful  to highlight   that  for all $i\geq 0$ and each  $b\in \D(G_i)$ there exist $a\in\D(F_i)$ and $c\in \Delta_{\mathbb{M}}(M) $ such that $a=b+c.$
 \end{rem}

\begin{thm}\label{shift-inq}
Let $R$ be a standard graded algebra
and let $M$ be a graded $R$-module equipped with the  filtration $\mathbb{M}$.
 Then for all $i\geq0$
\[t_{i}(M)\leq t_{i}(gr_{\mathbb{M}}(M))+v_{\mathbb{M}}(M).\]
Furthermore if $M$ is of homogeneous type with respect to $\mathbb{M}, $ then
\[t_{i}(gr_{\mathbb{M}}(M))+u_{\mathbb{M}}(M)\leq t_{i}(M)\leq t_{i}(gr_{\mathbb{M}}(M)))+v_{\mathbb{M}}(M).\]

s
\end{thm}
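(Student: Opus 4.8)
The plan is to leverage Proposition \ref{free-res} directly, since it already produces a (possibly non-minimal) graded free resolution $\mathbf{F}$ of $M$ whose associated graded is a \emph{minimal} graded free resolution $\mathbf{G}$ of $gr_{\mathbb{M}}(M)$, together with the crucial degree-shift control in part (iv) and in Remark \ref{home-typ1}. First I would recall that a minimal graded free resolution $\mathbf{F}^{min}$ of $M$ is a direct summand of $\mathbf{F}$, so $\D(F_i^{min}) \subseteq \D(F_i)$ for all $i$; dually, every $b \in \D(G_i)$ equals $a - c$ for some $a \in \D(F_i)$ and some $c \in \Delta_{\mathbb{M}}(M)$, as noted at the end of Remark \ref{home-typ1}. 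The first inequality $t_i(M) \le t_i(gr_{\mathbb{M}}(M)) + v_{\mathbb{M}}(M)$ then follows quickly: take $j = t_i^R(M)$, so $\Tor_i^R(M,k)_j \ne 0$; by Remark \ref{home-typ1} we get $\Tor_i^{R^g}(gr_{\mathbb{M}}(M),k)_{j-c} \ne 0$ for some $c \in \Delta_{\mathbb{M}}(M)$, whence $j - c \le t_i(gr_{\mathbb{M}}(M))$, i.e.\ $j \le t_i(gr_{\mathbb{M}}(M)) + c \le t_i(gr_{\mathbb{M}}(M)) + v_{\mathbb{M}}(M)$ since $c \le \max \Delta_{\mathbb{M}}(M) = v_{\mathbb{M}}(M)$.

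For the second, lower, inequality I would use the homogeneous-type hypothesis, which is precisely the statement that the resolution $\mathbf{F}$ constructed in Proposition \ref{free-res} is \emph{already minimal}, so $\mathbf{F} = \mathbf{F}^{min}$ and $\D(F_i) = \D(F_i^{min})$ for every $i$. In that case the shift relations inside the proof of Proposition \ref{free-res} — in particular equations (\ref{shift-eq2}) and (\ref{shift-eq}), which say $a'_{ni} = a_{ni} + c$ with $c \in \Delta_{\mathbb{M}}(M)$ — give a precise dictionary between $\D(F_i)$ and $\D(G_i)$: every $a \in \D(F_i)$ is of the form $a = b + c$ with $b \in \D(G_i)$ and $c \in \Delta_{\mathbb{M}}(M)$, and conversely. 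Picking $a = t_i(F_i) = t_i^R(M)$, write $a = b + c$ with $b \in \D(G_i) = \D(G_i)$ and $c \ge \min \Delta_{\mathbb{M}}(M) = u_{\mathbb{M}}(M)$; since $b \le t_i(gr_{\mathbb{M}}(M))$ is false in general — wait, rather $b$ need not be the top degree, so instead I would pick $b = t_i(gr_{\mathbb{M}}(M))$, which by the converse correspondence is of the form $a - c$ for some $a \in \D(F_i)$, $c \in \Delta_{\mathbb{M}}(M)$; then $t_i(gr_{\mathbb{M}}(M)) = a - c \le t_i^R(M) - u_{\mathbb{M}}(M)$, giving $t_i(gr_{\mathbb{M}}(M)) + u_{\mathbb{M}}(M) \le t_i^R(M)$, which is exactly the claimed bound.

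The main obstacle — really the only subtle point — is making sure the bookkeeping of which degree-set corresponds to which Betti module is done in the right direction: the inequality $t_i(M) \le \ldots$ needs the direction "$\D(F_i^{min}) \subseteq \D(F_i)$ and each element of $\D(F_i)$ shifts down by at most $v_{\mathbb{M}}(M)$ into $\D(G_i)$", whereas the lower bound under homogeneous type needs the \emph{equality} $\D(F_i) = \D(F_i^{min})$ plus the converse shift "each $b \in \D(G_i)$ lifts to some $a \in \D(F_i)$ with $a - b = c \ge u_{\mathbb{M}}(M)$". Both directions are already embedded in the proof of Proposition \ref{free-res} and in Remark \ref{home-typ1}, so the argument is essentially a careful reading of that proof; I would just spell out the two chains of inequalities explicitly and invoke homogeneous type at exactly the point where minimality of $\mathbf{F}$ is needed. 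I should also double-check the edge case $F_i = 0$ (resp.\ $G_i = 0$), where the $t_i$ are $-\infty$ by convention and the inequalities hold vacuously, and note that by homogeneous type $F_i = 0 \iff G_i = 0$.
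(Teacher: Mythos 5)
Your proposal is correct and follows essentially the same route as the paper: both directions of the inequality are extracted from the degree bookkeeping of Proposition \ref{free-res}(iv) and Remark \ref{home-typ1}, with the homogeneous-type hypothesis invoked precisely to identify $\mathbf{F}$ with the minimal resolution for the lower bound. The only difference is cosmetic (you phrase the upper bound via nonvanishing of $\Tor$ rather than via $t_i(M)\in\D(F_i^{min})\subseteq\D(F_i)$), and your remarks on the edge case $F_i=0$ are a harmless addition.
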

\begin{proof}
By Proposition \ref{free-res}, there exists a filtered graded free resolution $\FF$ of $M$ such that $gr_{\mathbb{F}}(\FF)=\GG$ is a  graded minimal free resolution of $gr_{\mathbb{M}}(M)$.  Let $i\geq0, $  we have $t_{i}(M)\in D(F_{i})$. Hence by  Proposition \ref{free-res} (iv) and Remark \ref{home-typ1},
 there exist  $b\in \D(G_{i})$ and
$c\in \Delta_{\mathbb{M}}(M)$ such that\\
\[t_{i}(M)=b+c. \]\\
Since  $b \leq  t_{i}(gr_{\mathbb{M}}(M))) $ and $c\leq v_{\mathbb{M}}(M), $ we get $t_{i}(M)\leq t_{i}(gr_{\mathbb{M}}(M)))+v_{\mathbb{M}}(M).$
If $M$ is of homogeneous type, then $\mathbf{F}$ is minimal. Now again  by Remark \ref{home-typ1}, for every  $i\geq0$ there exists
$a^{'}\in \D(F^{min}_{i})$ and $c^{'}\in\Delta_{\mathbb{M}}(M)$ such that\\
\[t_{i}(gr_{\mathbb{M}}(M))=a^{'}-c^{'}.\]\\
 Considering the fact that $u_{\mathbb{M}}(M)\leq c^{'}$,
we obtain
$$ t_{i}(gr_{\mathbb{M}}(M))+u_{\mathbb{M}}(M)\leq  t_{i}(gr_{\mathbb{M}}(M)) +c' = a' \leq t_{i}(M).$$

\end{proof}
\begin{cor}\label{reg-inq}
With the above notation and assumptions, we have
\[\reg_{R}(M)\leq \reg_{R}(gr_{\mathbb{M}}(M))+v_{\mathbb{M}}(M).\]
Moreover if $M$ is of homogeneous type with respect to $\mathbb{M}, $ then
\begin{equation}\label{reg-inq-hom}
\reg_{R}(gr_{\mathbb{M}}(M))+u_{\mathbb{M}}(M\leq \reg_{R}(M)\leq \reg_{R}(gr_{\mathbb{M}}(M))+v_{\mathbb{M}}(M).
\end{equation}
\end{cor}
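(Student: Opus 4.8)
The plan is to deduce the corollary directly from Theorem \ref{shift-inq} by passing to suprema. Recall that $\reg_{R}(M)=\sup\{t_{i}(M)-i : i\in\NN\}$ and likewise $\reg_{R}(gr_{\mathbb{M}}(M))=\sup\{t_{i}(gr_{\mathbb{M}}(M))-i : i\in\NN\}$, with the convention that the $i$-th summand equals $-\infty$ whenever the corresponding free module in a minimal free resolution vanishes.

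First I would prove the upper bound. Fix $i\ge 0$. If $F_{i}=0$ in a minimal free resolution of $M$, then $t_{i}(M)-i=-\infty$ and there is nothing to check; otherwise $\beta_{i}^{R}(M)\neq 0$, hence $\beta_{i}^{R^{g}}(gr_{\mathbb{M}}(M))\neq 0$ because $\beta_{i}^{R}(M)\le \beta_{i}^{R^{g}}(gr_{\mathbb{M}}(M))$, so that $t_{i}(gr_{\mathbb{M}}(M))$ is a finite integer. In either case Theorem \ref{shift-inq} gives $t_{i}(M)\le t_{i}(gr_{\mathbb{M}}(M))+v_{\mathbb{M}}(M)$, and therefore
$$t_{i}(M)-i\le \bigl(t_{i}(gr_{\mathbb{M}}(M))-i\bigr)+v_{\mathbb{M}}(M)\le \reg_{R}(gr_{\mathbb{M}}(M))+v_{\mathbb{M}}(M).$$
Taking the supremum over $i$ yields $\reg_{R}(M)\le \reg_{R}(gr_{\mathbb{M}}(M))+v_{\mathbb{M}}(M)$; this also covers the case $\reg_{R}(gr_{\mathbb{M}}(M))=\infty$ trivially.

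For the two-sided estimate when $M$ is of homogeneous type with respect to $\mathbb{M}$, the upper bound is the one just established. For the lower bound I would use that homogeneous type means $\beta_{i}^{R}(M)=\beta_{i}^{R^{g}}(gr_{\mathbb{M}}(M))$ for all $i$, so that $F_{i}=0$ if and only if $G_{i}=0$; thus the index sets over which the two regularities are actually computed coincide. For each such $i$ the left-hand inequality in Theorem \ref{shift-inq} gives $t_{i}(gr_{\mathbb{M}}(M))+u_{\mathbb{M}}(M)\le t_{i}(M)$, hence
$$\bigl(t_{i}(gr_{\mathbb{M}}(M))-i\bigr)+u_{\mathbb{M}}(M)\le t_{i}(M)-i\le \reg_{R}(M).$$
Taking the supremum over $i$ gives $\reg_{R}(gr_{\mathbb{M}}(M))+u_{\mathbb{M}}(M)\le \reg_{R}(M)$, again including the case where $\reg_{R}(gr_{\mathbb{M}}(M))$ is infinite, which then forces $\reg_{R}(M)=\infty$.

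The only delicate point — and the "main obstacle", modest as it is — is the bookkeeping with the conventions $t_{i}=-\infty$ and $\reg=+\infty$: one must make sure that vanishing free modules do not spuriously enter the suprema. This is precisely why the homogeneous type hypothesis (equality of all Betti numbers, hence matching supports of the two resolutions) is needed for the lower bound, whereas the inequality $\beta_{i}^{R}(M)\le\beta_{i}^{R^{g}}(gr_{\mathbb{M}}(M))$ suffices for the upper one.
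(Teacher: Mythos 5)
Your proposal is correct and follows the same route as the paper, which simply derives the corollary by applying the termwise bounds of Theorem \ref{shift-inq} and passing to suprema in the definition $\reg_{R}(M)=\sup\{t_{i}(M)-i\}$. Your extra care with the $-\infty$ conventions and the matching of supports under the homogeneous-type hypothesis is a sound elaboration of what the paper leaves implicit.
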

\begin{proof}
It is a direct consequence of Proposition \ref{shift-inq}.
\end{proof}
\begin{cor} \label{corol} 
If
  $\reg_{R}(gr_{\mathbb{M}}(M))< \infty, $ then $\reg_{R}(M)< \infty$.
Furthermore the converse holds,  provided   $M$ is of homogeneous type with  respect to $\mathbb{M}$.
\end{cor}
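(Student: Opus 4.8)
The plan is to extract the statement formally from Corollary~\ref{reg-inq}, so the only thing to check is that the correcting terms appearing there are genuinely finite integers. First I would recall that since $M$ is finitely generated it admits a \emph{finite} minimal homogeneous standard basis $\{f_1,\dots,f_s\}$ with respect to $\mathbb{M}$ (the existence of a homogeneous standard basis when $\mathbb{M}$ is a graded filtration was noted right after the definition). Hence $\Delta_{\mathbb{M}}(M)=\{\deg(f_j)-v_{\mathbb{M}}(f_j):1\le j\le s\}$ is a finite nonempty set of integers, and consequently $v_{\mathbb{M}}(M)=\max\Delta_{\mathbb{M}}(M)$ and $u_{\mathbb{M}}(M)=\min\Delta_{\mathbb{M}}(M)$ are well-defined integers (here I am using $M\ne 0$).

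Next I would invoke the upper bound of Corollary~\ref{reg-inq}, namely $\reg_R(M)\le \reg_R(gr_{\mathbb{M}}(M))+v_{\mathbb{M}}(M)$. If $\reg_R(gr_{\mathbb{M}}(M))<\infty$, the right-hand side is a finite integer, whence $\reg_R(M)<\infty$. This part requires no hypothesis on $M$ beyond being a nonzero finitely generated graded module with a graded $\m$-stable filtration, which is exactly the standing assumption.

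For the converse, assuming in addition that $M$ is of homogeneous type with respect to $\mathbb{M}$, I would use the lower bound in~\eqref{reg-inq-hom}, that is $\reg_R(gr_{\mathbb{M}}(M))+u_{\mathbb{M}}(M)\le \reg_R(M)$, which rearranges to $\reg_R(gr_{\mathbb{M}}(M))\le \reg_R(M)-u_{\mathbb{M}}(M)$. If $\reg_R(M)<\infty$, the right-hand side is finite, so $\reg_R(gr_{\mathbb{M}}(M))<\infty$. Combining the two directions gives the corollary.

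I do not expect any real obstacle here: all the substantive work is already done in Proposition~\ref{free-res} and Theorem~\ref{shift-inq}, which produce the filtered free resolution and the two-sided degree estimates; the present statement is a bookkeeping consequence. The only point deserving a word of caution is that the converse genuinely needs the homogeneous-type hypothesis, and this is not a defect of the argument but a real phenomenon — Example~\ref{contro} exhibits a graded module $M$ with $\reg_R(M)<\infty$ (indeed of finite projective dimension) for which $\reg_R(M^g)=\infty$, so the extra assumption cannot be dropped.
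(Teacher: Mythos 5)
Your proposal is correct and follows exactly the route the paper intends: Corollary~\ref{corol} is stated as an immediate consequence of the two-sided bounds in Corollary~\ref{reg-inq}, and your only added content — verifying that $v_{\mathbb{M}}(M)$ and $u_{\mathbb{M}}(M)$ are finite integers because a finitely generated module admits a finite minimal homogeneous standard basis — is a reasonable and accurate piece of bookkeeping that the paper leaves implicit. Your closing remark about Example~\ref{contro} showing the necessity of the homogeneous-type hypothesis also matches the paper's own comment following the corollary.
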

The assumption of being of homogeneous type cannot be deleted as  the following example shows.
\begin{exam} \label{contro}
Let $k$ be a field  and $R=k[X,Y]/(X^3)$. Let $x,y$ be
 the residue class of $X,Y$ in $R$ and set $\m=(x,y)$.
 Consider the module $M$ whose minimal graded free resolution is
\[0\rightarrow R(-3)\xrightarrow{\binom{x^{2}}{y^{3}}}R(-1)\bigoplus R(0)\rightarrow 0\]
We have $M=R(-1)\bigoplus R(0)/N$ where $N= (x^{2},y^{3})R$
then $M^g\cong R\bigoplus R/N^{*}$
where $N^{*}$ is the submodule of $R\bigoplus R$
generated by the initial forms of elements of N with respect to the $\m$-adic filtration.
One can see that $N^{*}=\langle (x^{2},0),(0,xy^{3})\rangle$.
The minimal graded free resolution of $M^g$ is \\
\[\cdots\rightarrow R^{2}\xrightarrow{f_{i+1}}R^{2}\xrightarrow{f_{i}}
\cdots \xrightarrow{f_{2}}R^{2}\xrightarrow{f_{1}} R^{2}\rightarrow 0\]
where
\begin{equation*}
f_{1} = \left(
\begin{array}{ccc}
x^{2} & 0 \\
0 & xy^3
\end{array} \right)\qquad
f_{2k} = \left(
\begin{array}{ccc}
x & 0 \\
0 & x^{2}
\end{array} \right)\qquad
f_{2k+1} = \left(
\begin{array}{ccc}
x^{2} & 0 \\
0 & x
\end{array} \right)\quad for \ k\geq 1.
\end{equation*}

Therefore $M$ is not of homogeneous type and $\reg_{R}(M)<\infty, $
but $$ \reg_{R}(M^g)=~+\infty.$$
\end{exam}

\section{Linearity defect of a graded  module }

In this section we establish notation, provide the definitions  of linearity defect and of Koszul modules.
    We suggest
 \cite{CDR},    \cite{HI} and  \cite{S} for more details concerning  the   previous concepts. Even if this section  mainly concerns  with  graded standard $k$-algebras, we present the notions in the more general case of modules over local rings in view of   next section.

\vskip 2mm
\indent Let $(R,\m,k)$ be a local ring. A complex of $R$-modules
\[\mathbf{C}=\cdots \rightarrow C_{n+1}\rightarrow C_{n}\xrightarrow{\partial_{n}} C_{n-1}\rightarrow\cdots\]
is said to be minimal if $\partial_{n}(C_{n})\subseteq \m C_{n-1}$.
The {\em standard filtration}
$\mathfrak{F}$ of a minimal complex $\mathbf{C}$ is defined by subcomplexes $\{\mathfrak{F}_{i}\mathbf{C}\},$
where $(\mathfrak{F}^{i}\mathbf{C})_{n}=\m^{i-n}C_{n}$ for all $n\in\mathbb{Z}$.
with $\m^{j}=0$ for $j\leq 0$. The associated graded
complex with respect to this filtration is denoted by $\lin^{R}(\mathbf{C})$,
and called the {\em linear part of $\mathbf{C}$}.
By construction, $\lin^{R}(\mathbf{C})$ is a minimal complex of
graded modules over the graded ring $R^{g}$ and
it has the property that
$\lin^{R}(\mathbf{C})_{n}=C^{g}_{n}(-n).$

\begin{defin}
An $R$-module $M$ is said to be Koszul if $\lin^{R}(\FF )$ is acyclic,
 where $\FF $ is a  minimal free resolution of $M$.

\end{defin}

Herzog and Iyengar \cite[1.7]{HI} introduced an invariant
which  is  a measure of how far is $M$   from being  Koszul.

\begin{defin}\label{linear defect}
Let $M$ be an $R$-module and $\FF $ its minimal free resolution. Define
\[\ld_{R}(M):=\sup\{i\in\mathbb{Z} |  \HH_{i}(\lin^{R}(\FF) \neq 0\}\]
and we say that $\ld_{R}(M)$  is  the linearity defect of $M.$
\end{defin}

 By the uniqueness of a minimal free resolution  up to isomorphism of complexes, one has that $\ld_R(M) $ does not depend on $\FF, $ but only on the module $M.$   It follows by the definitions  that $M$ is a Koszul $R$-module if and only if $\ld_{R}(M)=0. $ Note that,  for any integer $d$, one has $\ld_{R}(M)\leq d$ if and only if the $dth$ syzygy module $\Omega^{d}(M)$ of $M$ is Koszul.

 Next result  provides a characterization of Koszul modules, see   \cite[1.5]{HI}.

 \begin{prop} \label{HI}
The following facts are equivalent:
 \begin{itemize}
 \item[(1)]  $M$ is Koszul,
 \item[(2)]  $\ld_R(M)=0,$
 \item[(3)]  $\lin(\FF) $ is a minimal free resolution of  $M^g $ as a $R^g$-module,
  \item[(4)] $M^g  $ has a linear resolution as a $R^g$-module.
 \end{itemize}
 \end{prop}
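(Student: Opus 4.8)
The statement to prove is Proposition~\ref{HI}, characterizing Koszul modules. Let me think about how to prove the equivalence of (1)--(4).

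The plan is to establish the chain of equivalences $(1)\Leftrightarrow(2)\Leftrightarrow(3)\Leftrightarrow(4)$ by unwinding the definitions of the linear part $\lin^R(\FF)$ and of the linearity defect. Recall that for a minimal free resolution $\FF$ of $M$, the linear part $\lin^R(\FF)$ is a minimal complex of graded $R^g$-modules with $\lin^R(\FF)_n = F_n^g(-n)$, and that $\ld_R(M) = \sup\{i : \HH_i(\lin^R(\FF))\neq 0\}$.

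First I would note that $(1)\Leftrightarrow(2)$ is immediate from the definitions: $M$ is Koszul iff $\lin^R(\FF)$ is acyclic, i.e.\ $\HH_i(\lin^R(\FF))=0$ for all $i$, which by Definition~\ref{linear defect} means $\ld_R(M)=0$ (using the convention that $\sup\varnothing = 0$ or that $\HH_0(\lin^R(\FF))\neq 0$ automatically since $\lin^R(\FF)_0 = F_0^g$ surjects onto $M^g$, so the only way $\ld_R(M)=0$ is that higher homology vanishes). Next, for $(2)\Leftrightarrow(3)$: the complex $\lin^R(\FF)$ always has $\HH_0(\lin^R(\FF)) = \coker(\lin^R(\FF)_1 \to \lin^R(\FF)_0)$; one computes this cokernel is $M^g$ because the linear part of the differential $\phi_1: F_1 \to F_0$ is the "leading term" map whose cokernel, by the theory of associated graded modules (or directly), equals $M^g = F_0^g / N^*$ where $N = \ker(F_0 \to M)$ — here one uses that $\lin^R(\phi_1)$ has entries the linear forms from the degree-$1$ part of the entries of $\phi_1$, and these generate $N^* = \ini(N)$... actually more carefully, since $\FF$ is minimal all entries of $\phi_1$ lie in $\m$, and the linear part keeps exactly the degree-one components; the image of $\lin^R(\phi_1)$ in $F_0^g$ is generated by the initial forms of a standard basis of $N$ with respect to the $\m$-adic filtration, hence equals $N^* = gr_{\m}(N)$, and $\coker = F_0^g/N^* = (F_0/N)^g = M^g$. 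So $\lin^R(\FF)$ is always a complex of free $R^g$-modules with $\HH_0 = M^g$ and it is minimal by construction; therefore it is a (minimal) free resolution of $M^g$ precisely when $\HH_i = 0$ for $i>0$, i.e.\ precisely when $\ld_R(M)=0$. This gives $(2)\Leftrightarrow(3)$. Finally $(3)\Leftrightarrow(4)$: since $\lin^R(\FF)_n = F_n^g(-n)$ is generated in degree $n$, the complex $\lin^R(\FF)$ is a \emph{linear} complex; so if it is a minimal free resolution of $M^g$ then $M^g$ has a linear resolution, giving $(3)\Rightarrow(4)$. Conversely, if $M^g$ has a linear resolution $\GG$, then by uniqueness of minimal free resolutions and a comparison argument, $\lin^R(\FF)$, being a minimal complex of free $R^g$-modules with $\HH_0 = M^g$ whose terms have the right generating degrees, must coincide with $\GG$ up to isomorphism — here one can invoke that a minimal linear complex resolving $M^g$ in low homological degrees agrees with the linear part, or argue by induction on homological degree comparing Betti numbers; the cleanest route is to use the general inequality $\beta_i^{R^g}(M^g) \le \beta_i^{R^g}(\lin^R(\FF)) \ \text{(hmm, wrong direction)}$ — instead, observe $\lin^R(\FF)_i = F_i^g(-i)$ so $\rank \lin^R(\FF)_i = \beta_i^R(M)$, and one always has $\beta_i^{R^g}(M^g) \le \beta_i^R(M)$; if $M^g$ has a linear resolution, a dimension count on the linear strands forces equality at each step, making $\lin^R(\FF)$ exact in positive degrees.

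The main obstacle I expect is the careful verification of $\HH_0(\lin^R(\FF)) = M^g$ and, in the implication $(4)\Rightarrow(3)$, the rigidity argument showing that the linear part must be \emph{the} minimal linear resolution when one exists — this is where one needs to know that no cancellation can occur, i.e.\ that $\beta_i^R(M) = \beta_i^{R^g}(M^g)$ once $M^g$ has a linear resolution (which is exactly the ``modules with linear resolution are of homogeneous type'' fact recalled in the introduction). Since all the ingredients — the structure of $\lin^R(\mathbf{C})$, the inequality $\beta_i^{R^g}(gr(M)) \le \beta_i^R(M)$, and strictness of differentials via standard bases — are already available, the argument is essentially a bookkeeping exercise chaining the four conditions together. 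For a published proof one would simply cite \cite[1.5]{HI}, but the self-contained argument above via the standard-basis machinery of Section~1 fits the paper's approach.
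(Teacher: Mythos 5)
Your chain of equivalences has the right skeleton, and $(1)\Leftrightarrow(2)$ and $(3)\Rightarrow(4)$ are fine, but the step you yourself flag as needing ``careful verification'' --- that $\HH_0(\lin^R(\FF))=M^g$ --- is where the real content lies, and your argument for it fails. The image of $\lin^R(\phi_1)$ in $F_0^g$ is generated by the initial forms of the chosen \emph{minimal generators} $\phi_1(e_j)$ of $N=\Ker(F_0\to M)$, and a minimal generating set of $N$ need not be a standard basis with respect to the induced $\m$-adic filtration: its initial forms may generate a proper submodule of $N^*=gr_{\m}(N)$. For instance, take $R=k[[x,y]]/(xy)$ and $M=R/(x+y^2)$: then $y(x+y^2)=y^3$ in $R$, so $N^*\supseteq (x^*,(y^*)^3)\supsetneq (x^*)=\Image(\lin^R(\phi_1))$, whence $\HH_0(\lin^R(\FF))\neq M^g$ (consistently, this $M$ is not Koszul). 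In general one only has a canonical surjection $\HH_0(\lin^R(\FF))\twoheadrightarrow M^g$; that it is an isomorphism \emph{under the hypothesis that the higher homology of $\lin^R(\FF)$ vanishes} is precisely what must be proved, e.g.\ via the spectral sequence of the standard filtration (whose $E^1$-page is $\HH(\lin^R(\FF))$ and which converges to the associated graded of $\HH(\FF)$), or via the strictness criterion of Theorem~\ref{stand-base} applied degree by degree. So $(2)\Rightarrow(3)$ is not a bookkeeping exercise.

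The implication $(4)\Rightarrow(3)$ also rests on a false inequality: you invoke $\beta_i^{R^g}(M^g)\le\beta_i^R(M)$, but the general inequality goes the other way, $\beta_i^R(M)\le\beta_i^{R^g}(M^g)$, and Example~\ref{contro} of the paper refutes your version ($\beta_i^R(M)=0$ for $i\ge 2$ while $\beta_i^{R^g}(M^g)=2$). The workable route is the one hinted at in the introduction: from a minimal \emph{linear} resolution $\GG$ of $M^g$ one builds, by the standard-basis construction (\cite[2.4]{S}, the local analogue of Proposition~\ref{free-res}), a filtered free resolution of $M$ whose associated graded complex is $\GG$; since the shifts of $\GG$ increase strictly with homological degree, no cancellation can occur, so that resolution is minimal and its linear part is isomorphic to $\GG$, hence acyclic, giving $(1)$. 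Finally, note that the paper does not prove this proposition at all --- it cites \cite[1.5]{HI} --- so your attempt is a reconstruction rather than a parallel of the paper's argument, and it has genuine gaps at exactly the two places where the standard-basis machinery (strictness, and rigidity of linear resolutions) is indispensable.
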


Note that if $M$ is Koszul, then $M^g$ has a linear resolution, hence $M$ is of homogeneous type,  that is  $\beta_j^R(M)= \beta_{j}^{R^g}(M^g)$ for every $j$.

\vskip 2mm Let R be a graded standard $k$-algebra.    If  $M $  is a finitely generated graded $R$-module, one
can define in the same manner the linearity defect  $\ld_R(M),$  by using a minimal
graded free resolution $\FF $ of $M$  over $R.$     Notice that  the complex  $\lin^R(\FF)$ is obtained from $\FF $ by replacing with $0$  all entries of degree $>1$  in the matrices representing the homomorphisms.
\noindent If a graded $R$-module $M$ has a linear resolution,
 then it is Koszul, but the converse
 fails in general; see \cite[1.9]{HI}.

  \vskip 2mm

Accordingly  with the classical definition given by Priddy,   $R$ is a Koszul graded $k$-algebra if $k$ has linear resolution as $R$-module. Because $k^g=k,  $  then by the previous proposition, $R$ is a Koszul graded $k$-algebra if and only if $k$ is a Koszul $R$-module.   If $R$ is a Koszul graded algebra,  it  was proved by Iyengar and  R\"omer  that a graded $R$-module $M$ is Koszul if and only if  $M$ is componentwise linear.


\vskip 2mm

Koszul modules have a special graded free resolution.   The following proposition  describes  the graded resolutions of Koszul modules extending to any graded standard algebra the result proved by Rossi and Sharifan in \cite[2.2.]{RS}.   \\

 \begin{prop} \label{extension} Let $R$ be a standard graded algebra  and let
$M$ be a finitely generated graded $R$-module  with $\D(M)=\{i_{1},\cdots,i_{s}\}$. Assume that
$M$ is Koszul.  Then for each $n\geq 1$ we have\\
\[\Tor^{R}_{n}(M,k)_{j}=0 \quad for\  j\neq i_{1}+n,\cdots,i_{s}+n.\]\\
Furthermore   if for some $n\geq 1$ and $1\leq r\leq s$, we have $\Tor^{R}_{n}(M,k)_{i_{r}+n}=0$,
then $\Tor^{R}_{m}(M,k)_{i_{r}+m}=0,$ for all $m\geq n.$
 \end{prop}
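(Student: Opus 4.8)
The plan is to extract both assertions from the structure of the linear part of a minimal graded free resolution of $M$, together with Proposition \ref{HI}, which tells us that for a Koszul module the linear part $\lin^R(\FF)$ \emph{is} a minimal graded free resolution of $M^g$ over $R^g$. First I would record the elementary fact that, since $\D(M)=\{i_1,\dots,i_s\}$, the free module $F_0$ in $\FF$ is $\bigoplus_{r=1}^s R(-i_r)^{b_{0r}}$ with $\sum_r b_{0r}=\beta_0$. Because $\FF$ is minimal, each differential $d_n$ has entries in $\m$; the linear part retains only the degree-one entries, so $\lin^R(\FF)_n = F_n^{\,g}(-n)$ is generated in degrees obtained by adding $1$ to the generating degrees of $F_{n-1}^{\,g}(-(n-1))$. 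Inducting on $n$, the generating degrees of $\lin^R(\FF)_n$ all lie in $\{i_1+n,\dots,i_s+n\}$. Since $M$ is Koszul, $\lin^R(\FF)$ is a minimal free resolution of $M^g$, hence $\Tor_n^{R^g}(M^g,k)$ (equivalently $\Tor_n^R(M,k)$, by Proposition \ref{HI} and homogeneous type) is concentrated in degrees $\{i_1+n,\dots,i_s+n\}$. This gives the first assertion.

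For the second assertion I would argue via the componentwise decomposition of the Koszul resolution into ``strands.'' The key structural observation is that $\lin^R(\FF)$, being a minimal graded complex with $\lin^R(\FF)_n$ generated in degrees among $\{i_r+n\}$ and all differentials linear, splits as a direct sum of subcomplexes indexed by the residue of the internal degree: for each $r$, let $\FF^{(r)}$ be the subcomplex whose degree-$(i_r+n)$ part sits in homological degree $n$. Since the differentials are degree-preserving and linear, a generator in degree $i_r+n$ can only map into degree-$(i_r+n)$ elements of the previous free module, which (being generated in degrees $\{i_t+(n-1)\}$) forces $t=r$ when the $i_t$ are distinct — so the strands do not interact and $\lin^R(\FF) = \bigoplus_r \FF^{(r)}$ as complexes of $R^g$-modules. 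Now the hypothesis $\Tor_n^R(M,k)_{i_r+n}=0$ says precisely that $(\FF^{(r)})_n = 0$. But $\FF^{(r)}$ is itself a complex of free $R^g$-modules whose differentials are matrices of linear forms, built by the recursion above starting from $(\FF^{(r)})_0 = R^g(-i_r)^{b_{0r}}$; the recursion produces $(\FF^{(r)})_{m}$ as (a free cover of a submodule of) $\m^{R^g}\cdot (\FF^{(r)})_{m-1}$, so if one term vanishes, every later term vanishes — $(\FF^{(r)})_n=0 \Rightarrow (\FF^{(r)})_m=0$ for all $m\ge n$. Translating back, $\Tor_m^R(M,k)_{i_r+m}=0$ for all $m\ge n$.

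The step I expect to require the most care is justifying rigorously that the strands $\FF^{(r)}$ are genuinely direct summands of $\lin^R(\FF)$ as complexes — i.e.\ that no differential entry connects degree $i_r+n$ to a generator living in a different strand. When the $i_r$ are pairwise distinct this is immediate from degree bookkeeping (a linear map from degree $i_r+n$ lands in degree $i_r+n$, which among the available generating degrees $\{i_t+(n-1)\}$ can only be matched by $t=r$), but if the $i_r$ have coincidences one must instead group equal values and argue the claim for the distinct values among $\{i_1,\dots,i_s\}$; the statement as written implicitly allows repetitions in the list, so I would phrase the decomposition over the \emph{set} $\{i_1,\dots,i_s\}$ of distinct generating degrees. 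The remaining ingredients — that $\lin^R(\FF)$ computes $\Tor^{R^g}(M^g,k)$, and that $\Tor_n^R(M,k)$ and $\Tor_n^{R^g}(M^g,k)$ agree because a Koszul module is of homogeneous type — are exactly Proposition \ref{HI} and the remark following it, so they can be invoked directly without further work.
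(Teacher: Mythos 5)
Your argument is correct in substance but reaches the result by a different route from the paper's. The paper deduces everything from the standard-basis machinery: by Proposition \ref{free-res} and Remark \ref{home-typ1}, since a Koszul module is of homogeneous type, the filtered lift of the minimal (linear) resolution of $M^g$ is the minimal graded resolution of $M$, and the shift relation $a'_{n i}=\deg(c_{ij})+a'_{n-1\, j}$ with $\deg(c_{ij})=1$ yields both $\D(F_n)\subseteq\{i_1+n,\dots,i_s+n\}$ and, read contrapositively, the persistence statement ($i_r+n+1\in\D(F_{n+1})$ forces $i_r+n\in\D(F_n)$). You instead work directly with $\lin^R(\FF)$ via Proposition \ref{HI}, and your strand decomposition plus Nakayama is an attractive conceptual repackaging of the paper's contrapositive induction; both proofs ultimately rest on the same combinatorial fact, namely that every generator degree of $F_n$ ($n\ge 1$) equals $1$ plus some generator degree of $F_{n-1}$, but your derivation bypasses Proposition \ref{free-res} entirely, which is arguably cleaner. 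Two points to tighten. First, that key fact does \emph{not} follow merely from ``the linear part retains only the degree-one entries'': you need that no column of $\lin^R(d_n)$ is zero, and this requires acyclicity together with minimality of $\lin^R(\FF)$ (a zero column would give a cycle outside $\m F_n$, hence nonzero homology). Koszulness supplies this, and you use exactly the same acyclicity-plus-minimal-cover mechanism in the second half, so the gap is presentational, but the first half should invoke it explicitly. Second, the parenthetical equating $\Tor^{R^g}_n(M^g,k)$ with $\Tor^R_n(M,k)$ as graded objects is not right --- homogeneous type only matches total Betti numbers, and the internal degrees differ by the regrading that sends $\D(M)$ to $\{0\}$ --- but this does no harm, since your conclusion is read off $\D(F_n)$ directly from the induction you already carried out.
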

 \begin{proof}
 Since  $M$ is Koszul, then $M$ is of homogeneous type. We obtain the result as a consequence  of  Proposition \ref{free-res}. In fact  let $\FF $ and $\GG  $ be respectively  minimal graded
free resolutions  of $M$ and $M^g$   as $R$-modules. Observe that  $\Tor^{R}_{i}(M,k)_{j}=0$ if and only if $j\notin \D(F_{i}).$
By the assumption $\GG $ is linear,  then  by Remark \ref{home-typ1}  we get
\[\D(F_{n})\subseteq \{n+i_{1},\cdots,n+i_{s}\} \quad \text{for all} \ n\geq 1.\]
Let $n\geq 1$ and assume  $i_{r}+n+1\in \D(F_{n+1}). $ Again since $\GG $ is linear,
we conclude that there exists
$a\in \D(F_{n})$ such that $i_{r}+n+1=1+a. $
Hence  we get $i_{r}+n\in \D(F_{n}) $ and
 the conclusion follows inductively.
 \end{proof}

It is shown in \cite{HI} that  if $\ld_R(M)$ is finite, then $\reg_R(M)$ is finite as well.
   \\ We can  refine   this result giving a different proof and  a more precise information on  the regularity.

\begin{prop}\label{ld}
Let $R$ be a standard graded algebra  and let
$M$ be a finitely generated graded $R$-module.
If $\ld_{R}(M)=d<\infty$ then $$\reg_{R}(M)=\max\{t_{i}(M)-i: 0\leq i\leq d\}.$$
In particular, if $M$ is Koszul then $\reg_{R}(M)=t_{0}(M).$
\end{prop}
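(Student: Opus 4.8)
The plan is to exploit the fact that the linearity defect is finite precisely when a high-enough syzygy module is Koszul. Write $d=\ld_R(M)$ and set $N=\Omega^d(M)$, the $d$-th syzygy module of $M$ in its minimal graded free resolution $\FF$. By the remark following Definition \ref{linear defect}, $\ld_R(M)\le d$ is equivalent to $N$ being a Koszul $R$-module. The key point about a Koszul module is that, by Proposition \ref{HI}, $N^g$ has a linear resolution over $R^g$; in particular $N$ is of homogeneous type and, by Proposition \ref{free-res} together with Remark \ref{home-typ1} (or directly from linearity), the minimal graded free resolution of $N$ is linear in the sense that $t_i(N)=t_0(N)+i$ for all $i\ge 0$ for which $F_i\ne 0$. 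Thus the "tail" of the resolution of $M$ beyond homological degree $d$ contributes regularity exactly $t_0(N)-0=t_d(M)-d$ (since $N$ sits in homological degree $d$ of $\FF$, its generators are the degree-$j$ parts of $F_d$, so $t_0(N)=t_d(M)$).

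The concrete steps, in order, are as follows. First I would record that $\reg_R(M)=\sup\{t_i(M)-i:i\ge 0\}$ by definition, so it suffices to bound the contribution of $i>d$. Second, I would identify the truncated resolution: the part of $\FF$ in homological degrees $\ge d$ is (a shift of) a minimal graded free resolution of $N=\Omega^d(M)$, so $\beta_{i,j}^R(M)=\beta_{i-d,\,j}^R(N)$ for $i\ge d$, giving $t_i(M)=t_{i-d}(N)$ for $i\ge d$. Third, using that $N$ is Koszul, apply Proposition \ref{extension} (or Remark \ref{home-typ1}) to conclude $t_{i-d}(N)=t_0(N)+(i-d)$ whenever $F_{i-d}^N\ne 0$; hence $t_i(M)-i=t_0(N)-d=t_d(M)-d$ for all such $i>d$. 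Fourth, combine: for $i\le d$ the quantities $t_i(M)-i$ are listed explicitly, and for $i>d$ they all equal $t_d(M)-d$, which is among the terms with index $\le d$ (namely $i=d$); therefore $\reg_R(M)=\max\{t_i(M)-i:0\le i\le d\}$. Finally, for the last sentence, if $M$ is Koszul then $d=0$, so the maximum runs over the single index $i=0$ and $\reg_R(M)=t_0(M)$.

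The main obstacle, and the step requiring the most care, is the second one: verifying cleanly that the minimal free resolution of the syzygy module $N=\Omega^d(M)$ is literally the shifted tail of the minimal free resolution of $M$, so that Betti numbers and the invariants $t_i$ transfer without loss. This is standard but must be stated precisely in the graded category (the syzygy is taken with the grading inherited from $F_{d-1}$), and one must make sure the "linearity" of the Koszul module $N$ is phrased correctly: Koszulness of $N$ does \emph{not} by itself say the resolution of $N$ over $R$ is linear — rather it says $\lin^R$ of that resolution is acyclic, equivalently $N^g$ has a linear $R^g$-resolution. The bridge from "$N^g$ has a linear resolution" to "$t_i(N)=t_0(N)+i$" is exactly Remark \ref{home-typ1} / Proposition \ref{extension}, since Koszulness forces $N$ to be of homogeneous type and hence $\beta_{i,j}^R(N)=\beta_{i,j}^{R^g}(N^g)$; once that identification is in hand the degree bookkeeping is immediate. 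Everything else is routine manipulation of the definition of regularity.
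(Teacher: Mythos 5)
Your proof is correct and follows essentially the same route as the paper: reduce to the Koszul syzygy $\Omega^{d}(M)$ and use the comparison with the associated graded module (Proposition \ref{free-res} and Remark \ref{home-typ1}, packaged in the paper as Corollary \ref{reg-inq}) to get $\reg_{R}(\Omega^{d}(M))=t_{0}(\Omega^{d}(M))=t_{d}(M)$, hence $t_{i}(M)-i\leq t_{d}(M)-d$ for $i\geq d$. One small correction: Proposition \ref{extension} only yields $t_{i-d}(\Omega^{d}(M))\leq t_{0}(\Omega^{d}(M))+(i-d)$ (top Betti degrees can drop along the resolution), not the equality you assert, but this inequality is all your argument actually needs.
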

\begin{proof}
If $\ld_{R}(M)=0$,  that is $M$ is Koszul,  we have $\reg_{R}(gr_{m}(M))=0. $
Hence,  by Corollary \ref{reg-inq},  we conclude that  $\reg_{R}(M)\leq t_{0}(M)$.
 By  the fact that $t_{0}(M)\leq \reg_{R}(M), $ we get
 $\reg_{R}(M)=t_{0}(M).$ Now let $\ld_{R}(M)=d, $ then the
 $dth$ syzygy module $\Omega^{d}(M)$ is Koszul and by the above
 $\reg_{R}(\Omega^{d}(M))=t_{d}(M).$ This follows \\
 \[t_{i}(M)-i\leq t_{d}(M)-d \quad \text{for all }\ i\geq d  \]
 and then
 \[\reg_{R}(M)=\max\{t_{i}(M)-i: 0\leq i\leq d.\}\]

\end{proof}

\section{Linearity defect of a  module over a local ring  }
Let $(R,m)$ be a local ring and  let $M$ be a finitely generated $R$-module equipped
with the filtration $\mathbb{M}. $ Motivated by the questions raised  in the introduction our aim is to investigate the interplay between $\ld_R(M) $ and $\ld_{R^g} (gr_{\mathbb{M}}(M)).$

Recall that if $R$ is a regular local ring, Rossi and Sharifan in  \cite[3.6.]{RS}   proved that  $\ld_{R^g} (gr_{\mathbb{M}}(M))=0  $ implies $\ld_R(M)=0, $ provided $M$ is minimally generated by a standard basis with respect to $\mathbb{M}.$ Notice that Proposition \ref{extension} allows us to extend this result to any local ring by repeating verbatim the same proof in \cite{RS}. All the details can be found in PhD thesis of the second author, see \cite{AR}.

Observe that,  by Proposition \ref{HI},  $\ld_R(M)=0$ implies $\ld_{R^g}(M^g) =0, $ hence $M$ is Koszul if and only if $M^g$ is Koszul.
\vskip 2mm

Unfortunately this statement   cannot be extend to $\ld_R(M)=d. $ Example \ref{contro} shows that $\ld_R(M)< \infty $ does not imply $\ld_{R^g}(M^g) < \infty. $ In fact there exists  a module such that  $\ld_R(M)< \infty $ having finite homological dimension,  but $\ld_{R^g}(M^g) $ is infinite since the regularity is infinite,   see Proposition  \ref{ld}.
\vskip 2mm
Because  our aim would be to give an answer to Question 1 based on $k$  as $R$-module,   one can ask Question 2   for cyclic modules. Next example disproves the assertion.

\begin{exam} \label{cyclic}
Consider $R=k[[x,y,z,u]]/(x^3)$  and let $ J=(x^3, y^2+x^2, z^2y+u^4)/(x^3).$ The ideal $J$ is generated by a complete intersection, hence the homological dimension of $R/J$ as $R$-module is finite and so the linearity defect is finite.

\noindent Note that $R^{g}$  is a hypersurface, and  since $\dep(R^g)-\dep((R/J)^g)=3$ by \cite[Theorem 5.1.1]{Av} the minimal graded $R^{g}$-free
resolution of  $(R/J)^{g}$
becames periodic of period 2 and     $\beta_{i}((R/J)^{g}) =
\beta_{i+1}((R/J)^{g})$ for $i \geq 3.  $   By computing the  3th and 4th syzsgies of $(R/J)^g$ we deduce   that  $\reg_{R^g}((R/J)^{g})$ is  infinite, therefore $\ld((R/J)^{g})
$ is  infinite.
\end{exam}

We will  prove that,  if $M$ is of homogeneous type,  the finiteness of  $\ld_R(M)$ controls the regularity of $gr_{\mathbb{M}}(M).$  Notice that the finiteness of the regularity of a graded module does not imply the finiteness of the  linearity defect.
\vskip 2mm

We need a technical lemma. With the notation fixed in Proposition \ref{free-res}, let $(\GG, \delta.)$
be the minimal graded $R^{g}$-free resolution of $gr_{\mathbb{M}}(M)$.
We build up an  $R$-free resolution $(\FF, d.)$
of $M$ and denote by $\mathcal{M}_{n}=(m_{r s})$
the corresponding matrix associated to the differential map $d_{n}: F_n \to F_{n-1}$.

\begin{lem}\label{ld-hom}
With the above notations, if for all $n\geq 1$ the matrix
$\mathcal{M}_{n}$ has an entry of $\m$-adic valuation $\leq 1$ in  each column,
then $$\reg_{R^{g}}(gr_{\mathbb{M}}(M))= t_{0}(gr_{\mathbb{M}}(M)).$$
\end{lem}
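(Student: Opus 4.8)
The plan is to extract from the hypothesis that every column of $\mathcal{M}_n$ has an entry of $\m$-adic valuation $\le 1$ the conclusion that the linear part of $\FF$ is already forcing the resolution of $gr_{\mathbb{M}}(M)$ to be linear after the first step. First I would recall from Proposition \ref{free-res} that $gr_{\mathbb{F}_n}(d_n) = \delta_n$, so the matrix of $\delta_n$ is obtained from $\mathcal{M}_n$ by the rule \eqref{gr}: the $(r,s)$ entry of $\delta_n$ is the initial form of $m_{rs}$ exactly when $v_R(m_{rs}) + a_{n-1,r}$ realizes the minimum $v_{\mathbb{F}_{n-1}}$ along column $s$ (equivalently, $a_{ns}$), and is $0$ otherwise. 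Here $a_{ns} = a'_{ns} - c$ with $c \in \Delta_{\mathbb{M}}(M)$ by Proposition \ref{free-res}(iv) and \eqref{shift-eq}, i.e. $\deg(e_{ns}) = \deg_{\GG}(e_{ns}) + c$ with $c$ independent of $n$ and $s$ along a fixed chain, so degrees in $\FF$ and $\GG$ differ by a constant shift tracked by $\Delta_{\mathbb{M}}(M)$.

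The key step is to translate the valuation hypothesis on $\mathcal{M}_n$ into a statement about degrees in $\GG$. Fix $n \ge 1$ and a basis element $e_{ns}$ of $G_n$. By hypothesis some entry $m_{rs}$ of column $s$ of $\mathcal{M}_n$ has $v_R(m_{rs}) \le 1$; I would argue that for the entry realizing the minimum in that column, the valuation is also $\le 1$ — indeed the minimal-valuation entry has valuation at most that of any particular entry once we account for the twists, but more carefully: $a_{ns} = \min_r\{v_R(m_{rs}) + a_{n-1,r}\}$, and since $\GG$ is a resolution of a module generated in the degrees $\D(G_0) = \{t_0(gr_{\mathbb{M}}(M))\}$ up to the shift, one gets $\deg_{\GG} e_{ns} - \deg_{\GG} e_{n-1,r} = v_R(m_{rs})$ for an entry realizing the minimum. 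The hypothesis then says that for \emph{some} column entry this difference is $\le 1$; combined with the standard fact that in any minimal free resolution consecutive twists strictly increase, we get that there is a generator $e_{n-1,r}$ of $G_{n-1}$ with $\deg e_{ns} = \deg e_{n-1,r} + 1$, i.e. $t_n(gr_{\mathbb{M}}(M)) \le t_{n-1}(gr_{\mathbb{M}}(M)) + 1$ for every $n \ge 1$ — in fact $t_n = t_{n-1} + 1$ once one also uses minimality (entries lie in $\m$, so valuations are $\ge 1$). Telescoping from $n = 1$ gives $t_n(gr_{\mathbb{M}}(M)) = t_0(gr_{\mathbb{M}}(M)) + n$, whence $\reg_{R^g}(gr_{\mathbb{M}}(M)) = \sup_n\{t_n - n\} = t_0(gr_{\mathbb{M}}(M))$.

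The delicate point I expect to be the main obstacle is the passage from ``some entry in each column has valuation $\le 1$'' to ``the entry realizing the column minimum has valuation $\le 1$'', because \emph{a priori} the cheap entry and the minimizing entry could be different, and the twists $a_{n-1,r}$ vary with $r$. The resolution of this is precisely the uniformity of the shift coming from Proposition \ref{free-res}(iv): all the twists $a_{ij}$ in $\GG$ differ from those in $\FF$ by constants drawn from the single finite set $\Delta_{\mathbb{M}}(M)$, and since $gr_{\mathbb{M}}(M)$ is generated in a single "effective degree" relative to that shift, the column minimum is forced. I would spell this out by induction on $n$: assuming $\D(G_{n-1})$ is a singleton (equal to $t_{n-1}$), every entry in column $s$ of $\mathcal{M}_n$ contributes $v_R(m_{rs}) + t_{n-1}$, so the minimizing entry is exactly the one of smallest $\m$-adic valuation, which by hypothesis is $\le 1$ and by minimality of $\FF$ (each $m_{rs} \in \m$) is exactly $1$; hence $\D(G_n) = \{t_{n-1} + 1\}$, a singleton, closing the induction and the proof.
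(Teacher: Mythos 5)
Your overall strategy (bound $t_n(gr_{\mathbb{M}}(M))$ by $t_{n-1}(gr_{\mathbb{M}}(M))+1$ using the cheap entry in each column, then telescope) is the right one and is essentially the paper's, but the way you close the argument has a genuine gap. Your induction rests on the base case $\D(G_0)=\{t_0(gr_{\mathbb{M}}(M))\}$, i.e.\ that $gr_{\mathbb{M}}(M)$ is generated in a single degree. This is false in general: $\D(G_0)=\{v_{\mathbb{M}}(f_1),\dots,v_{\mathbb{M}}(f_{\beta_0})\}$ is the set of valuations of a minimal standard basis, and for an arbitrary $\m$-stable filtration these need not coincide (they all vanish only in the $\m$-adic case). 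Consequently the singleton hypothesis you propagate, and the exact equalities $t_n=t_{n-1}+1$ and $\D(G_n)=\{t_{n-1}+1\}$, are not available. They are also not needed: since $a_{ns}=v_{\mathbb{F}_{n-1}}(g_s)=\min_r\{v_R(m_{rs})+a_{n-1,r}\}$ is a minimum over rows, it is bounded above by the contribution of \emph{any} row, in particular by that of the cheap entry $m_{r_0 s}$ with $v_R(m_{r_0 s})\le 1$; this gives $a_{ns}\le 1+a_{n-1,r_0}\le 1+t_{n-1}(gr_{\mathbb{M}}(M))$ directly, with no need to decide whether the cheap entry realizes the minimum. The ``delicate point'' you single out is thus a non-issue for an upper bound. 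The paper runs exactly this estimate, phrased through the degree matrix and the dichotomy $v_R(m_{rs})=u_{rs}$ if $n_{rs}\ne 0$ and $v_R(m_{rs})>u_{rs}$ otherwise, and it proves the statement in the weaker but sufficient form: for each $j$ and $s$ there is some $i$ with $a_{js}\le j+a_{0i}$, whence $t_j(gr_{\mathbb{M}}(M))\le j+t_0(gr_{\mathbb{M}}(M))$ and the regularity claim follows.

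Two further inaccuracies. You invoke ``minimality of $\FF$'' to conclude $m_{rs}\in\m$, but $\FF$ is the resolution produced by Proposition \ref{free-res} and is minimal only when $M$ is of homogeneous type; the lemma must allow entries of valuation $0$, and the paper treats that case separately (there $n_{rs}=0$ by minimality of $\GG$, which forces $a_{js}<a_{j-1,r}$ and the bound still holds). Also, the ``standard fact that consecutive twists strictly increase in a minimal resolution'' controls $\indeg$, not $t_n$; it does not give $t_n\ge t_{n-1}+1$, so the telescoping to the exact equality $t_n=t_0+n$ would fail even apart from the base-case problem (for instance the resolution may be finite). If you replace the singleton induction by the one-line minimum estimate above and keep only the inequality $t_n\le t_0+n$, your proof becomes correct and coincides with the paper's.
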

\begin{proof}
Let \[\mathbf{G}_{.}=\cdots\rightarrow \bigoplus_{j=1}^{\beta_{1}}R^{g}(-a_{1 j})\xrightarrow{\delta_{1}}\bigoplus_{j=1}^{\beta_{0}}R^{g}(-a_{0 j})\xrightarrow{\delta_{0}} gr_{\mathbb{M}}(M)\rightarrow 0\]
be the minimal graded free resolution of $gr_{\mathbb{M}}(M)$ and
$U_{j}=(u_{r s})$ be the $j$-th degree matrix of $gr_{\mathbb{M}}(M)$,
where \[u_{r s}=a_{j s}-a_{j-1 r}.\]
We denote by $\mathcal{M}_{j}^{*}=(n_{r s})$ the corresponding
matrix associated to $\delta_{j}$. By  Proposition \ref{free-res} (ii) and (iii), the columns of
$\mathcal{M}_{j}^{*}$ are the initial forms of the corresponding
columns  of $\mathcal{M}_{j}$ with respect to the special filtration
which has been defined on $F_{j-1}. $  In particular the degree
matrix $U_{j}$ controls the valuations of entries of $\mathcal{M}_{j}$
with respect to the $\m$-adic filtration. We have

\begin{equation}\label{value}
\begin{cases} v_{R}(m_{r s}) = u_{r s} &\text{if}\quad n_{r s}\neq 0\\
v_{R}(m_{r s}) >u_{r s} & otherwise
\end{cases}
\end{equation}
\vskip 2mm

We prove by induction on $j\geq 0$  that for every $j$ and
$1\leq s\leq \beta_{j}$ there exists $i$, $1\leq i\leq \beta_{0}$
such that $a_{j s}\leq j+a_{0 i}$. The case $j=0$ is obvious. Let $j>0$ and suppose
that the result has been proved for every $0 \le i<j$.
Let $1\leq s \leq \beta_{j}$, by the assumption
there exists $r$ with $1\leq r \leq \beta_{j-1}$ such that for
the entry $m_{r s}$ of $\mathcal{M}_{j-1}$ we have
$v_{R}(m_{r s})\leq 1$. If  $v_{R}(m_{r s})=0$, then $n_{r s}=0$,
since $\mathbf{G}_{.}$ is minimal. Then from (\ref{value})
we get $a_{j s}<a_{j-1 r}. $ By inductive assumption  there exists
$i$,  $1\leq i\leq \beta_{0}$ such that $a_{j-1 r}\leq j-1+a_{0 i}$.
Thus we conclude  $a_{j s}\leq j+a_{0 i}$.
If $v_{R}(m_{r s})=1$,  by (\ref{value})  we may have two cases

\begin{equation*}
\begin{cases} a_{j s}=a_{n-1 r}+1\\
 a_{j-1 r}+1>a_{j s}
\end{cases}
\end{equation*}\\
By inductive assumption  in each case we get
\[a_{j s}\leq j+ a_{0 i}\quad \text{for some}\quad  1\leq i\leq \beta_{0}.\]
Therefore we conclude that for each $j>0$ there exists
$i ,1\leq i\leq \beta_{0}$ such that $t_{j}(gr_{\mathbb{M}}(M))\leq j+ a_{0 i}.$
This holds  that
 \[t_{j}(gr_{\mathbb{M}}(M))\leq j+ t_{0}(gr_{\mathbb{M}}(M))\]
and hence
 \[\reg_{R^{g}}(gr_{\mathbb{M}}(M))=t_{0}(gr_{\mathbb{M}}(M)).\]

\end{proof}

\begin{thm} \label{ldlocal}
Let $(R,m)$ be a local ring and let  $M$ be a Koszul module
equipped with the filtration $\mathbb{M}$. If $M$
is of homogeneous type with respect to $\mathbb{M}$,
then \[\reg_{R^{g}}(gr_{\mathbb{M}}(M))=t_{0}(gr_{\mathbb{M}}(M)).\]
\end{thm}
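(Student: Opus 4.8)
The plan is to deduce the statement from Lemma~\ref{ld-hom} by verifying its hypothesis, the key input being that $M$ is Koszul. Recall from Proposition~\ref{free-res} that, starting from the minimal graded $R^{g}$-free resolution $(\GG,\delta.)$ of $gr_{\mathbb{M}}(M)$, one builds an $R$-free resolution $(\FF,d.)$ of $M$ carrying a special filtration $\mathbb{F}$ with $gr_{\mathbb{F}}(\FF)=\GG$; this is precisely the resolution appearing in Lemma~\ref{ld-hom}, with $\mathcal{M}_{n}$ the matrix of $d_{n}$. Since $M$ is of homogeneous type with respect to $\mathbb{M}$, the resolution $\FF$ is minimal (this is exactly what homogeneous type means), so $\FF$ is a minimal free resolution of $M$ and $\lin^{R}(\FF)$ is the linear part of $M$; hence $M$ being Koszul amounts to $\lin^{R}(\FF)$ being acyclic, that is, $\HH_{i}(\lin^{R}(\FF))=0$ for all $i\geq 1$.

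The crucial step is to show that, for every $n\geq 1$, each column of $\mathcal{M}_{n}$ contains an entry of $\m$-adic valuation $\leq 1$. Minimality of $\FF$ forces every entry of $\mathcal{M}_{n}$ into $\m$, hence every entry has valuation $\geq 1$, so it suffices to exclude the possibility that some column of $\mathcal{M}_{n}$ has all its entries of valuation $\geq 2$. Suppose the $s$-th column were such. The differential $\lin^{R}(\FF)_{n}\to\lin^{R}(\FF)_{n-1}$ is represented by the matrix obtained from $\mathcal{M}_{n}$ by keeping only the valuation-one parts of the entries, so its $s$-th column would be zero; thus the $s$-th basis element of $\lin^{R}(\FF)_{n}=F_{n}^{g}(-n)$ would be a cycle. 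But $\lin^{R}(\FF)$ is a minimal complex over $R^{g}$, so the image of $\lin^{R}(\FF)_{n+1}\to\lin^{R}(\FF)_{n}$ is contained in the maximal homogeneous ideal of $R^{g}$ times $\lin^{R}(\FF)_{n}$ and hence contains no member of a free basis; therefore this basis element is not a boundary, so $\HH_{n}(\lin^{R}(\FF))\neq 0$, contradicting acyclicity at homological degree $n\geq 1$.

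Once the hypothesis of Lemma~\ref{ld-hom} has been verified, applying that lemma directly yields $\reg_{R^{g}}(gr_{\mathbb{M}}(M))=t_{0}(gr_{\mathbb{M}}(M))$, which is the desired equality.

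The only genuinely delicate part is the middle paragraph: one has to keep the degree shifts in $\lin^{R}(\FF)$ (where $\lin^{R}(\FF)_{n}=F_{n}^{g}(-n)$) straight and argue carefully that, in a minimal complex acyclic in positive homological degrees, a free basis element that is a cycle but not a boundary produces nonzero homology. The remaining bookkeeping is already absorbed in Proposition~\ref{free-res} and Lemma~\ref{ld-hom}.
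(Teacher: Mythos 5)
Your proposal is correct and follows essentially the same route as the paper: reduce to Lemma~\ref{ld-hom} by observing that homogeneous type makes the filtered resolution $\FF$ minimal and that Koszulness (acyclicity of $\lin^{R}(\FF)$) forces every column of each differential matrix to contain an entry of $\m$-adic valuation one. The only difference is that you spell out the cycle-versus-boundary argument behind the step the paper states in one line (``therefore each column of the matrices associated to the differential maps of $\lin(\FF)$ has a non-zero linear form''), which is a welcome clarification rather than a deviation.
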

\begin{proof}
Let $\mathbf{G} $ be the minimal graded $R^{g}$-free
resolution of $gr_{\mathbb{M}}(M)$.
Since $M$ is of homogeneous type, then the resolution  $\mathbf{F} $ is minimal. By the assumption $\lin(\mathbf{F} )$ is acyclic,
therefore each column of the matrices associated to the
differential maps of $\lin(\mathbf{F} )$ has a non-zero linear form.
This follows that each column of the corresponding matrices of
$\mathbf{F} $ has an entry of valuation one with respect to the $\m$-adic
filtration. Now the conclusion follows from Lemma  \ref{ld-hom}.
\end{proof}
\begin{cor}\label{ld- local}
Let $(R,\m)$ be a local ring and  let  $M$ be a finitely generated $R$-module
equipped with the filtration $\mathbb{M}$. Let $\ld_{R}(M)=d<\infty$ and assume that $M$
is of homogeneous type with respect to $\mathbb{M}$, then
\[\reg_{R^{g}}(gr_{\mathbb{M}}(M))= \max\{ t_{i}(gr_{\mathbb{M}}(M))-i ; 0\leq i\leq d\}.\]
\end{cor}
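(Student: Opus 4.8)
The plan is to reduce Corollary \ref{ld- local} to Theorem \ref{ldlocal} by passing to a high syzygy module, exactly as was done in Proposition \ref{ld} for the graded case. First I would set $\Omega = \Omega^d(M)$, the $d$-th syzygy of $M$ in its minimal free resolution. Since $\ld_R(M)=d$, the module $\Omega$ is Koszul (this is the standard fact recalled after Definition \ref{linear defect}: $\ld_R(M)\le d$ iff $\Omega^d(M)$ is Koszul). The key point that needs care is the filtration: using Proposition \ref{free-res}, build the filtered free resolution $\mathbf{F}$ of $M$ with $gr_{\mathbb{F}}(\mathbf{F})=\mathbf{G}$ a minimal graded $R^g$-resolution of $gr_{\mathbb{M}}(M)$; because $M$ is of homogeneous type, $\mathbf{F}$ is minimal. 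Then $\Omega = \operatorname{Image}(d_d) = \Ker(d_{d-1})$ inherits the induced filtration $\mathbb{F}_{d-1}\cap\Omega$, call it $\mathbb{M}'$, and by Proposition \ref{free-res}(ii)--(iii) we have $gr_{\mathbb{M}'}(\Omega) = \Ker(\delta_{d-1}) = \operatorname{Image}(\delta_d)$, which is the $d$-th syzygy of $gr_{\mathbb{M}}(M)$ over $R^g$. The tail $\cdots \to F_{d+1}\to F_d\to \Omega\to 0$ is the minimal $R$-free resolution of $\Omega$ (a truncation of a minimal resolution), and its associated graded is the minimal $R^g$-free resolution of $gr_{\mathbb{M}'}(\Omega)$; in particular $\Omega$ is of homogeneous type with respect to $\mathbb{M}'$.

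Next I would apply Theorem \ref{ldlocal} to $\Omega$ with the filtration $\mathbb{M}'$: since $\Omega$ is Koszul and of homogeneous type, $\reg_{R^g}(gr_{\mathbb{M}'}(\Omega)) = t_0(gr_{\mathbb{M}'}(\Omega))$. Now $t_0(gr_{\mathbb{M}'}(\Omega))$ is just the top degree of a minimal generator of the $d$-th syzygy of $gr_{\mathbb{M}}(M)$, which equals $t_d(gr_{\mathbb{M}}(M))$. Unwinding the definition of regularity for $gr_{\mathbb{M}}(M)$: for $i\ge d$ the Betti numbers of $gr_{\mathbb{M}}(M)$ in homological degree $i$ are those of $gr_{\mathbb{M}'}(\Omega)$ in degree $i-d$, so $t_i(gr_{\mathbb{M}}(M)) - i = t_{i-d}(gr_{\mathbb{M}'}(\Omega)) - (i-d) \le \reg_{R^g}(gr_{\mathbb{M}'}(\Omega)) = t_d(gr_{\mathbb{M}}(M)) - d$. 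Hence the supremum defining $\reg_{R^g}(gr_{\mathbb{M}}(M))$ is achieved among $0\le i\le d$, giving
\[
\reg_{R^g}(gr_{\mathbb{M}}(M)) = \max\{ t_i(gr_{\mathbb{M}}(M)) - i : 0\le i\le d\}
\]
as claimed.

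I expect the main obstacle to be the bookkeeping of filtrations on syzygies: one must verify that the induced filtration $\mathbb{M}'$ on $\Omega$ is $\m$-stable (this follows from Artin--Rees, as recalled in Section 1), that the truncated resolution is genuinely the minimal one, and above all that "$\Omega$ of homogeneous type with respect to $\mathbb{M}'$" holds — this is where homogeneous type of $M$ is essential, since it forces $\mathbf{F}$ minimal and hence its truncation minimal, so that $gr_{\mathbb{F}}$ of the truncation stays minimal by Proposition \ref{free-res}(i). A secondary point is the identification $t_0(gr_{\mathbb{M}'}(\Omega)) = t_d(gr_{\mathbb{M}}(M))$, which is immediate once one knows $gr_{\mathbb{M}'}(\Omega)$ is the $d$-th $R^g$-syzygy of $gr_{\mathbb{M}}(M)$ sitting inside the minimal resolution $\mathbf{G}$. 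Alternatively, if the syzygy manipulation of filtrations feels delicate, one can argue more directly: Theorem \ref{ldlocal} applied through Lemma \ref{ld-hom} shows the linear-part hypothesis on $\mathbf{F}$ (each column of each differential matrix $\mathcal{M}_n$ has an entry of $\m$-adic valuation one) holds for $n > d$ because $\lin^R(\mathbf{F})$ is acyclic in homological degrees $> d$ (as $\ld_R(M)=d$); feeding this into the induction in the proof of Lemma \ref{ld-hom} started at step $d$ yields $t_j(gr_{\mathbb{M}}(M)) \le (j-d) + t_d(gr_{\mathbb{M}}(M))$ for $j\ge d$, which is the same conclusion.
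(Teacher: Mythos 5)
Your proposal is correct and follows essentially the same route as the paper: pass to the $d$-th syzygy $\Omega^d(M)$, which is Koszul and (thanks to the homogeneous-type hypothesis forcing $\mathbf{F}$ minimal) of homogeneous type with respect to the induced filtration, apply Theorem \ref{ldlocal} to get $\reg_{R^g}(gr(\Omega^d(M)))=t_0(gr(\Omega^d(M)))=t_d(gr_{\mathbb{M}}(M))$, and then unwind as in Proposition \ref{ld}. The filtration bookkeeping you spell out is exactly what the paper leaves implicit in writing $gr_{\mathbb{F}}(\Omega^d(M))$, so your write-up is a more detailed version of the same argument.
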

\begin{proof}
Since $\ld_{R}(M)=d, $ then the dth syzygy $\Omega^d(M) $   of $M$ is Koszul. Hence we may apply Theorem  \ref{ldlocal} and we conclude that
$\reg_{R^{g}}(gr_{\mathbb{F}}(\Omega^d(M)))=t_{0}(gr_{\mathbb{F}}(\Omega^d(M)).$ We proceed now as in the proof of Proposition \ref{ld}.
\end{proof}

We are ready now to give   a  partial answer to Question 1.
\begin{prop} \label{k}
Let $(R,\m,k)$ be a local ring. If $\ld_{R}(k)<\infty$ and
$k$ is of homogeneous type,  then $\ld_{R}(k)=0.$
\end{prop}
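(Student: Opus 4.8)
The plan is to reduce the statement to Theorem \ref{ldlocal} (equivalently Corollary \ref{ld- local}) applied to $M = k$, and then invoke the graded result of Avramov and Peeva together with Proposition \ref{HI}. First I would observe that $k$ is a cyclic $R$-module with a trivial filtration: taking $\mathbb{M}$ to be any $\m$-stable filtration on $k$, the associated graded module $gr_{\mathbb{M}}(k)$ is concentrated in a single degree, so $gr_{\mathbb{M}}(k) \cong k$ as an $R^g$-module (up to a degree shift) and $t_0(gr_{\mathbb{M}}(k)) = u_{\mathbb{M}}(k) = v_{\mathbb{M}}(k)$. In particular, since $k$ is assumed to be of homogeneous type with respect to $\mathbb{M}$, Corollary \ref{ld- local} applies: from $\ld_R(k) = d < \infty$ we get
\[
\reg_{R^g}(gr_{\mathbb{M}}(k)) = \max\{ t_i(gr_{\mathbb{M}}(k)) - i : 0 \le i \le d \} < \infty.
\]
Since $gr_{\mathbb{M}}(k) \cong k$ over $R^g$ (after the harmless degree shift), this says $\reg_{R^g}(k) < \infty$.

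Next I would bring in the dichotomy of Avramov and Peeva \cite{AP}: for a standard graded $k$-algebra, either $\reg_{R^g}(k) = 0$ (i.e. $k$ has a linear resolution over $R^g$, so $R^g$ is Koszul) or $\reg_{R^g}(k) = \infty$. Having just shown $\reg_{R^g}(k) < \infty$, we are forced into the first alternative: $R^g$ is Koszul, equivalently $k$ has a linear $R^g$-resolution. By Proposition \ref{HI} (the equivalence $(4) \Leftrightarrow (2)$), this means $k$ is a Koszul $R$-module, i.e. $\ld_R(k) = 0$. This closes the argument.

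The main obstacle I anticipate is a clean justification of the reduction: one must be careful that the hypothesis "$k$ is of homogeneous type" is formulated with respect to some $\m$-stable filtration $\mathbb{M}$, and that for this filtration $gr_{\mathbb{M}}(k)$ really is (a shift of) $k$ over $R^g$ — this is immediate because $\m \cdot k = 0$ forces $\mathfrak{F}_1 k = 0$ for the $\m$-adic filtration and, for a general $\m$-stable filtration, the filtration stabilizes at $0$ after finitely many steps, so $gr_{\mathbb{M}}(k)$ lives in a single degree. One should also note that "$k$ of homogeneous type" is really only a condition relative to a chosen filtration, and the natural choice is the $\m$-adic one, for which homogeneous type is automatic; thus the hypothesis is only restrictive if one insists on a different filtration, but in any case the argument above goes through verbatim once $gr_{\mathbb{M}}(k) \cong k(-c)$ is observed. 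A minor secondary point is that applying Corollary \ref{ld- local} requires $M = k$ to be of homogeneous type with respect to the filtration actually used; this is exactly the standing assumption, so no further work is needed there. Everything else is a direct chaining of already-established results, so I would expect the write-up to be short.
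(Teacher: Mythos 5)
Your proof is correct and follows essentially the same route as the paper: apply Corollary \ref{ld- local} to $M=k$ (noting $gr_{\mathbb{M}}(k)\cong k$ up to a shift) to get $\reg_{R^g}(k)<\infty$, then invoke the Avramov--Peeva dichotomy and Proposition \ref{HI} to conclude $\ld_R(k)=0$. One caveat on your closing remarks: the claim that homogeneous type is ``automatic'' for the $\m$-adic filtration is false --- for $k$ it amounts to $\beta_i^{R}(k)=\beta_i^{R^g}(k)$ for all $i$, and the general inequality $\beta_i^{R}(k)\le\beta_i^{R^g}(k)$ can be strict; if it were automatic, this proposition would fully resolve Question 1 rather than give only the partial answer the paper claims. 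This error sits in an aside and does not affect the validity of your main argument.
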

\begin{proof}
By Corollary \ref{ld- local} we get $\reg_{R^{g}}(k)<\infty,  $ then according to
Avramov and Peeva \cite[ Theorem 2]{AP}  we obtain $\ld_{R}(k)=0.$
\end{proof}


\begin{thebibliography}{2}






\bibitem{AR} Ahangari Maleki .R, {\em Koszul rings and  modules and Castelnuovo-Mumford regularity}, PhD thesis, Kharazmi University (Iran), 2013.
\bibitem{Av} Avramov, L.L, {\em Infinite free resolutions}, Six lectures on commutative algebra
(Bellaterra, 1996), Progr. Math. 166, Birkhauser, Basel,  1--118 (1998) .
\bibitem{A-E} Avramov, L.L., Eisenbud, D.   {\em Regularity of modules over a Koszul algebra}, J.Algebra  153: 85--90 (1992).

\bibitem{AP}  Avramov, L.L., Peeva, I {\em Finite regularity and Koszul algebras}, Amer. J. Math. 123, (2001)  275–281.
\bibitem{CDR}  Conca A.,  De Negri E.,  Rossi  M.E., Koszul
algebras and regularity,   Irena Peeva Ed.. Commutative Algebra: Expository papers dedicated to David Eisenbud,  Springer, 285- 315 (2013).

\bibitem{HI}Herzog, J., Iyengar, S.  {\em Koszul modules}, J. Pure Appl. Algebra 201, (2005) 154--188.

\bibitem{HRV} Herzog, J.,  Rossi, M.E,  Valla, G. {\em On the depth of the symmetric algebra}, Trans. Amer. Math. Soc. 296(2),  (1986)  577-–606.


\bibitem{IR}Iyengar, S., R\"{o}mer, T.  {\em Linearity defects of modules over commutative rings}, J.Algebra. 322 (2009), 3212 --3237.

\bibitem{RV}  Robbiano, L.,   Valla, G. {\em Free resolutions for special tangent cones}, Commutative algebra
(Trento, 1981),  Lecture Notes in Pure and Appl. Math. 84, Dekker, New York, 253–-274
(1983).



\bibitem{RS} Rossi, M. E,  Sharifan, L. {\em Minimal free resolution of a finitely generated module over a regular local ring}, J. Algebra 322 , no. 10,  (2009) 3693–-3712 .

\bibitem{RSha}Rossi, M. E,  Sharifan, L. {\em Consecutive cancellation in Betti numbers of local
rings}, Proc. Amer. Math. Soc, 138, (2010)  61--73.



\bibitem{RV-Lect} Rossi, M. E, Valla, G. {\em Hilbert functions of filtered modules},
Lecture Notes of the Unione Matematica Italiana, {\bf9}. Springer-Verlag, Berlin; UMI, Bologna, 2010.

\bibitem{S-lin}\c{S}ega, L.M. {\em On the linearity defect of the residue field}, arXiv:1303.4680v1 [math.AC]  (2013).

\bibitem{S}\c{S}ega, L.M.   {\em Homological properties of powers of the maximal ideal of a local ring},
J. Algebra 241 (2001),  827--858.

\bibitem{Shibuta} Shibuta, T. {\em Cohen–-Macaulyness of almost complete intersection tangent cones}, J. Algebra 319 (8) (2008), 3222–-3243.

\end{thebibliography}
\end{document}